\crefname{hypothesis}{Hypothesis}{Hypotheses}
\title{Feynman's inverse problem\thanks{Submitted to the editors 24.10.2023}}
\author{Adrian Kirkeby\thanks{Departmen of Numerical Analysis and Scientific Computing, Simula Research Laboratory, Oslo, 
  (\email{adrian@simula.no})}}
\newcommand*{\addFileDependency}[1]{% argument=file name and extension
  \typeout{(#1)}% latexmk will find this if $recorder=0 (however, in that case, it will ignore #1 if it is a .aux or .pdf file etc and it exists! if it doesn't exist, it will appear in the list of dependents regardless)
  \@addtofilelist{#1}% if you want it to appear in \listfiles, not really necessary and latexmk doesn't use this
  \IfFileExists{#1}{}{\typeout{No file #1.}}% latexmk will find this message if #1 doesn't exist (yet)
}
\newcommand*{\myexternaldocument}[1]{%
    \externaldocument{#1}%
    \addFileDependency{#1.tex}%
    \addFileDependency{#1.aux}%
}
\begin{document}

\maketitle

% REQUIRED
\begin{abstract}
 We analyse an inverse problem for water waves posed by Richard Feynman in the BBC documentary \emph{Fun to Imagine}. We show that the problem can be modelled as an inverse Cauchy problem for gravity-capillary waves, conduct a detailed analysis of the Cauchy problem, and give a uniqueness proof for the inverse problem. Somewhat surprisingly, this results in a  positive answer to Feynman's question. In addition, we derive stability estimates for the inverse problem both for continuous and discrete measurements, propose a simple inversion method and conduct numerical experiments to verify our results. 
\end{abstract}

% REQUIRED
\begin{keywords}
  inverse problems, water waves, partial differential equations, mathematical modeling 
\end{keywords}

% REQUIRED
\begin{AMS}
  76B15, 35R25, 00A69,  42C05, 93B07
\end{AMS}
\section{Introduction}
Richard Feynman is one of the most iconic physicists of the 20th century and needs little introduction. In contrast to most earlier giants of science, Feynman was alive after the invention of the video camera, and this resulted in numerous entertaining and inspiring recordings of him talking about science and philosophy. In one such recording, the BBC documentary series \emph{Fun to Image} from 1983, there is a \href{https://www.youtube.com/watch?v=egB9p5ZbrEg}{segment}\footnote{It is recommended that you watch the clip before you go on reading the paper.} where Feynman talks about waves. We quote: 
\vspace{3mm}
\begin{center}
    \parbox{11cm}{\emph{If I'm sitting next to a swimming pool, and somebody dives in [...], I think of the waves and things that have formed in the water. And when there's lots of people who have dived in the pool there is a very great choppiness of all these waves all over the water. 
    And to think that it's possible, maybe, that in those waves is a clue to what's happening in the pool, that some sort of insect or something, with sufficient cleverness could sit in the corner of the pool and, just be disturbed by the waves, and by the nature of the irregularities and bumping of the waves, have figured out who jumped in where and when, and whats happening all over the pool.}}\\
\end{center}
\vspace{3mm}
With Feynman's characteristic enthusiasm, we are presented with an inverse problem for water waves, that is, a problem where one attempts to estimate some unknown information (``what's happening all over the pool") from an indirect measurement (``the irregularities and bumping of the waves" observed by the insect), causally related through the physics of water waves. 

Inverse problems are a central part of modern science, with applications in, for example, medical imaging, seismology, astronomy, and machine learning. Inverse problems are challenging, and they are often called ill-posed problems due to the possible lack of a unique solution and the instability of the inversion procedure. Indeed, it is often impossible to determine the exact cause of a measurement. However, inverse problems for some types of wave propagation, such as electromagnetic and acoustic waves, are among the more tractable examples, with a long list of successful applications \cite{scherzer2010handbook,mueller2012linear}. We are, however, dealing with a rather different type of waves, namely water waves. Again, Feynman in his ``Lecture on Physics, Vol. 1" gives the following characterization of water waves: 
\vspace{3mm}
\begin{center}
    \parbox{11cm}{\emph{Now, the next wave of interest, that are easily seen by everyone and which are usually used as an example of waves in elementary courses, are water waves. As we shall soon see, they are the worst possible example, because they are in no respects like sound and light: they have all the complications that waves can have.}}
\end{center}
\vspace{3mm}
In this paper we introduce Feynman's inverse problem for water waves. Using a linear two + three dimensional wave model (2D-3D)  wave model, we conduct a detailed analysis of the forward and inverse problem.
The question we want to address is ``How well does information propagate in water waves?". 
We show that the answer to this question is ``quite well" and explain some of the reasons. First, we give a positive answer to Feynman's question in the form of a uniqueness result (\Cref{sec:uniqueness}, \cref{uniqueness}). The answer is a bit surprising. It says that an observer measuring the wave amplitude and water velocity in any small area of the surface for an arbitrary short time can, in principle, determine the cause of the waves. The result relies on the non-locality of the system of partial differential equations (PDEs). 

In  \Cref{sec:stability} we use a spectral observability technique to obtain stability estimates for the inverse problem in the case where we have amplitude measurements of the waves along two adjacent sides of the pool, and in  \Cref{sec:stability-finite} we give a similar estimate for discrete measurements. Last, in  \Cref{section: experiments} we present a solution method for the inverse problem and test it on simulated data. \\
\newline 
Compared to the aforementioned acoustic or electromagnetic waves, inverse problems for water waves seem to have received less attention. However, there are some very interesting earlier works, and we now mention some of these. In \cite{smeltzer2019improved}, the authors use measurements of the surface wave to compute the near-surface currents, and in \cite{vasan2013inverse} and \cite{fontelos2017bottom} the authors aim to reconstruct the bottom topography from similar measurements. The paper \cite{sellier2016inverse} surveys several inverse problems related to free surface flows, and the topic of control and observability of both linear and non-linear water waves have been studied in, e.g., \cite{alazard2018control,reid1985boundary,reid1995control}. 

\section{Mathematical model}

\subsection{The forward problem}
We consider a rectangular pool of uniform depth. The geometry is then determined by the length $L$, the width $W$ and the depth $H$. We assume $W\leq L$. Let $X = (x_1,x_2)$. The domain occupied by the water is 
$$ \Omega_P = \left\{ (X,z) \in \mathbb{R}^3 :  0 < x_1 < L, 0 < x_2 < W, -H < z < 0   \right\}.$$
Next, we define
\begin{align*}
     \Gamma_P &= \left\{ (X,z) \in \overline{\Omega}_P\setminus \Omega_P :   -H \leq z < 0 \right\}, \\
    \Gamma_S &= \left\{X : 0 < x_1 < L, 0 < x_2 < W \right\},
\end{align*}
where $\overline{\Omega}_P$ denotes the closure of $\Omega_P$ and $\Gamma_P$ and $\Gamma_S $ represent the walls and bottom of the pool and the flat surface, respectively. \\

To define the PDE model, we let $\Delta_X = \partial^2_{x_1} + \partial^2_{x_2} $ and $\Delta_{X,z} = \partial^2_{x_1} + \partial^2_{x_2} + \partial^2_{z}$ be the 2D and 3D Laplacians, respectively, and let $\partial_\nu$ denote the outward pointing normal derivative on both $\Gamma_P, \Gamma_S$ and $\partial \Gamma_S$. 

We denote surface wave amplitude by $\eta(X,t)$, i.e.,  the height of the wave at the point $X \in \Gamma_S$ at time $t$ is ${z = \eta(X,t)}$.  
Next, we assume that the water is incompressible and irrotational, so the flow can be described in terms of a velocity potential $\phi(X,z,t)$. The velocity field is given by $V = \nabla_{X,z} \phi$, and the potential satisfies the Laplace equation $\Delta_{X,z} \phi = 0$. Since there is no flow of water through $\Gamma_P$, we impose a Neumann boundary condition on the potential,
\begin{equation*}
    V \cdot \mathbf{\nu} = \partial_\nu \phi   = 0 ,\quad (X,z) \in \Gamma_P, t > 0. 
\end{equation*}
\cref{fig:pool} illustrates the domain and quantities.

\begin{figure}[H]
    \centering
    \includegraphics[scale = 0.20]{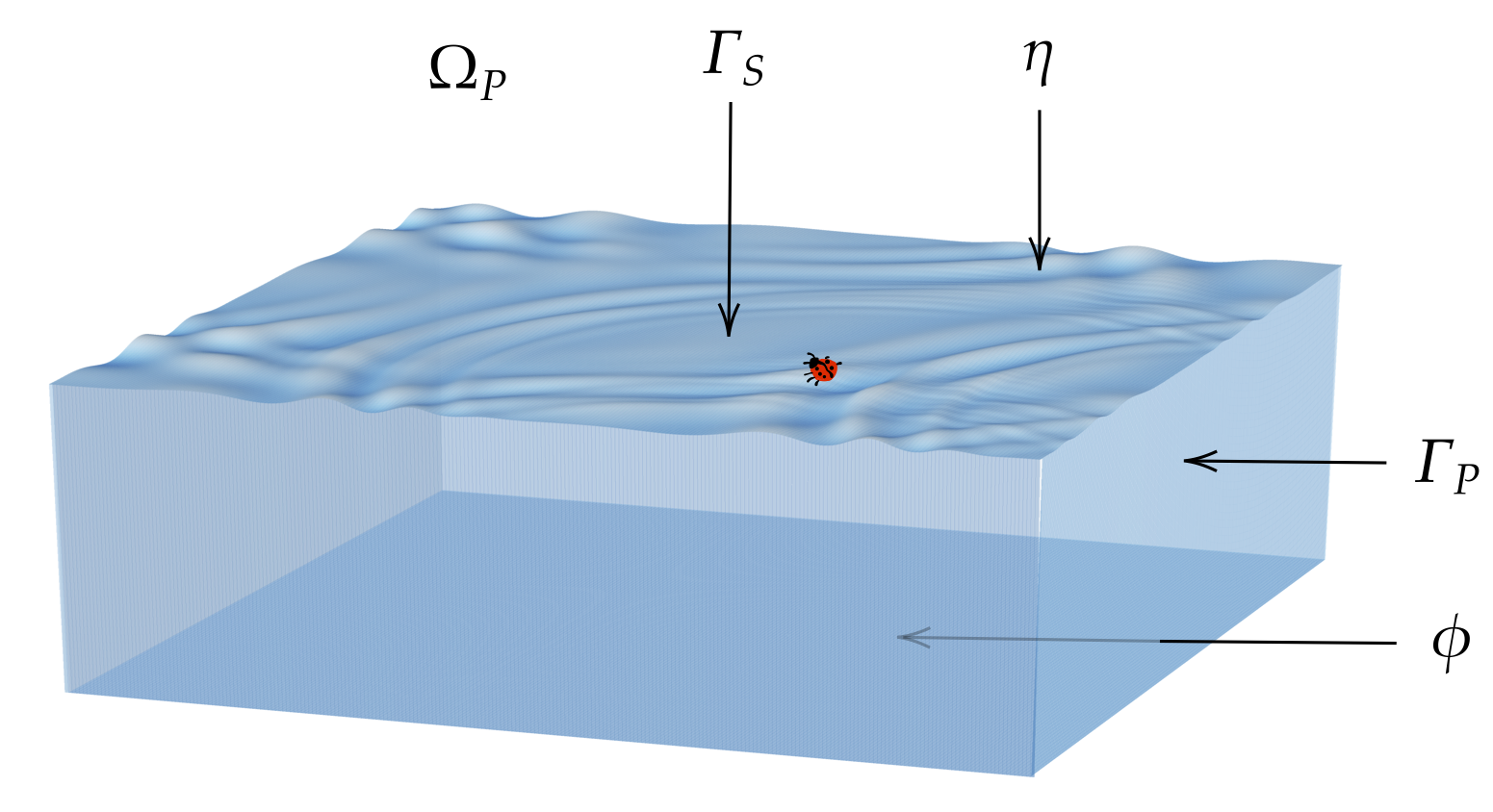}
    \caption{The domain $\Omega_P$. The surface wave amplitude $\eta$ is defined on $\Gamma_S$, while $\phi$ is defined in all of $\Omega_P$. The homogeneous Neumann condition is posed on the submerged pool boundary $\Gamma_P$. The question is if it is possible for the ladybug to figure out the initial state of this wave field from its local observation?}
    \label{fig:pool}
\end{figure}

We model the disturbance of the initially calm water, the ``lots of people who have dived in the pool", as the initial condition for the surface wave amplitude denoted by $\eta_0$, together with the initial condition for the velocity potential $\phi$ at the surface, denoted $\varphi_0$. To model the surface waves, we consider the Airy wave model with surface tension. This is a linearized wave model, and a derivation of it together with a discussion of its validity can be found in, e.g., Ch. 5 in \cite{ablowitz2011nonlinear}. Although one can show that surface tension has little influence on the behavior of longer waves (gravity waves), it is this effect that makes it possible for the insect to sit on the surface. Also, it becomes important when one considers shorter waves (capillary waves), and it turns out to be central to establish stability of the inverse problem. 

The small parameter in the linearization is the wave steepness $s = ka$, where $k$ is the magnitude of wave number and $a$ is the maximum amplitude, and the linearization is known to be a good approximation when  $s \ll 1$ and $a\ll H$ (cf. Ch. 1 in \cite{kuznet2002linear} and references therein). The system of PDEs is

\begin{align}
    \begin{cases}
      \qquad \qquad \partial_t \eta - \partial_z\phi\big|_{z=0} &=  \quad0, \quad  X \in \Gamma_S, t> 0, \\
  \partial_t \phi\big|_{z=0}  + g\eta  - S\Delta_X \eta &= \quad 0, \quad X \in \Gamma_S, t> 0,\\
     \qquad \qquad \qquad \quad   \Delta_{X,z} \phi &= \quad 0, \quad (X,z) \in \Omega_P, t > 0, \\
     \qquad \qquad \qquad \qquad \partial_\nu \phi  &= \quad  0,  \quad (X,z) \in \Gamma_P, t > 0, \\
     \qquad \qquad \qquad \qquad \partial_\nu \eta  &= \quad 0,  \quad X \in \partial \Gamma_S, t > 0, \\
     \qquad \left(\eta \big|_{t = 0}, \phi\big|_{z = 0, t = 0}\right) &= \quad \left(\eta_0,\varphi_0\right), \quad X \in \Gamma_S,
    \end{cases}
    \label{pool}
\end{align}
\newline 
where $g$ is the gravitational acceleration and $S = \frac{\sigma}{\rho}$, where $\sigma$ is the surface tension coefficient and $\rho$ is the mass density of water. 

The first equation in \eqref{pool} says that the velocity of the surface amplitude should equal the vertical velocity of the water at the surface, while the second equation results from linearizing the dynamic Bernoulli equation at the surface, with the pressure due to surface curvature, i.e., the surface tension,  given by $S\Delta_X \eta$. In the bulk of the water, $\Delta_{X,z}\Phi = 0$ ensures the incompressibility condition $\nabla_{X,z} \cdot V = 0$. The boundary condition on $\eta$ ensures that the waves are reflected at the edges of the pool, and the last equation is the initial condition.

\subsection{Measurements}
We consider several different measurements. Let $\Gamma_M \subset \Gamma_S$ be a subset of the surface, and for $0\leq t_0 < t_1$, let $I_T = [t_0,t_1]$ be the measurement time interval. In the analysis of the inverse problem for Feynman's insect, we consider the measurement $\mathcal{M}$ to be
\begin{equation*}
\mathcal{M}(X,t) = (\eta(X,t), \nabla_X \phi(X,0,t) , \quad X \in \Gamma_M, \quad    t \in I_T.
\end{equation*}
That is, we measure the wave amplitude $\eta$ and horizontal water velocity $(V_1,V_2) = \nabla_X \phi(X,0,t) $ on the set $\Gamma_M$, i.e., the ``the irregularities and bumping of the waves" observed by the insect on the surface.  
In \Cref{sec:stability} and \Cref{sec:stability-finite}, we assume that we only measure the amplitude $\eta$ and consider noisy measurements, measurements of the wave velocity, and discrete measurements.

\section{Analysis of the water wave model}
\label{section: forward problem analysis}
Before turning to the inverse problem, we introduce a convenient framework for the analysis and derive some useful properties of \eqref{pool}. For related work on linear (and non-linear) water waves, see, for example, \cite{johnson1997modern,bridges2016lectures,kuznet2002linear,craig2006surface,ablowitz2011nonlinear}.
\subsection{Preliminaries}
\label{section: preliminaries}
First, note that for conservation of mass to hold in the pool, we must have  $\text{vol}(\Omega_P) = \int_{\Gamma_S}(H+\eta(X,t)) \mathrm{d}X $ and hence $\int_{\Gamma_S}\eta(X,t)\mathrm{d}X  = 0, \forall t \geq 0.$ This motivates the use of the function space 
$$\dot{L}^2= \left\{ u \in L^2(\Gamma_S) : \int_{\Gamma_S} u \mathrm{d}X = 0 \right\},$$
equipped with the $L^2$-inner product. Note that 
${L^2(\Gamma_S)  = \{\text{constant functions} \} \cup \dot{L}^2(\Gamma_S)}$, and so $\dot{L}^2$ is a Hilbert space.
Next, we use the functions $$\psi_{m,n}(X) = C_{m,n}\cos(\pi m x_1/L)\cos(\pi n x_2/W), \hspace{0.5em}C_{m,n} = \begin{cases}
    \frac{\sqrt{2}}{\sqrt{LW}}, \quad \text{if } m \text{ or } n = 0, \\
    \frac{2}{\sqrt{LW}}, \quad \text{otherwise, }
\end{cases}\hspace{0.5em} $$ with $m,n\in \mathbb{N}_0.$ One can verify that $\{\psi_{m,n}\}_{m+n > 0}$ is an orthonormal basis for $\dot{L^2}$. We write $\hat{u}_{m,n} = (u,\psi_{m,n})_{L^2}$, so $u \in \dot{L}^2$ has the expansion $u = \sum_{m+n > 0} \psi_{m,n}\hat{u}_{m,n}.$ We also introduce the Sobolev spaces $\dot{H}^s$, 
\begin{equation*}
    \dot{H}^s = \left\{u\in \dot{L}^2 : \sum_{m + n > 0}  (m^2 + n^2)^s|\hat{u}_{m,n}|^2 < \infty \right\}, \quad s \in \mathbb{R}. 
\end{equation*}
with the inner product $(u,v)_{\dot{H}^s} = \sum_{m+n > 0}(m^2 + n^2)^s \hat{u}_{m,n}\hat{v}_{m,n}.$ \\

We can simplify equation \eqref{pool} by considering the trace of the velocity potential at the surface.  Let $\varphi(X,t) = \phi(X,z,t)|_{z = 0}$. To reduce \eqref{pool} to an equation for $(\eta,\varphi)$, we introduce the Dirichlet-to-Neumann (DN) operator $\mathcal{G}$, 

\begin{equation}
    \mathcal{G}\varphi = \partial_z \phi|_{z=0}, \quad \text{where } \phi \text{ solves} \quad  \begin{cases}\Delta_{X,z} \phi = 0, \quad \text{in} \quad \Omega_P, \\
     \partial_\nu \phi  = 0, \quad \text{on} \quad   \Gamma_P, \\
      \phi\big|_{z = 0} = \varphi, \quad \text{on} \quad \Gamma_S.
    \end{cases}
    \label{d2n}
\end{equation}
Furthermore, we write  $\mathcal{L} = (S\Delta_X - g) $. We can now rewrite \eqref{pool} as 
\begin{equation}
    \frac{\mathrm{d}}{\mathrm{d}t}\begin{pmatrix} \eta \\\varphi \end{pmatrix} = \begin{bmatrix}
        0& \mathcal{G} \\ \mathcal{L} &  0\end{bmatrix} \begin{pmatrix} \eta \\\varphi \end{pmatrix}, \quad \begin{pmatrix} \eta \\\varphi \end{pmatrix}_{t=0} = \begin{pmatrix} \eta_0 \\\varphi_0 \end{pmatrix}.
        \label{craig-sulem}
\end{equation}
This is known as the Zakharov--Craig/Sulem formulation (cf. \cite{waterwavesprob}). It reduces the 2D-3D-system \eqref{pool} to a 2D-2D-system for $(\eta,\varphi)$, at the expense of introducing the non-local DN operator. In  \Cref{section: proofs}, we prove some useful properties of $\mathcal{G}$ and $\mathcal{L}$ using their spectral representation in the basis $\{\psi_{m,n}\}$. 

\subsection{Well-posedness of waves in the pool}
\label{section: wellposed}
We introduce the energy space $H_E$ for the system \eqref{craig-sulem}. 
We write $$ A= \begin{bmatrix}
        0& \mathcal{G} \\ \mathcal{L} &  0\end{bmatrix} \quad \text{and} \quad U = \begin{bmatrix}
            \eta \\
            \varphi
        \end{bmatrix},$$
We set $H_E = \dot{H}^1\times \dot{H}^{1/2}$, equipped with the inner product 
\begin{align*} 
    \langle U,V \rangle_E & = S(\nabla u_1,\nabla v_1)_{L^2} + g(u_1,v_1)_{L^2} +(\mathcal{G}u_2,v_2)_{L^2}, \quad U = \begin{pmatrix} u_1 \\ u_2 \end{pmatrix}, V = \begin{pmatrix} v_1 \\ v_2 \end{pmatrix}  \in H_E. 
\end{align*}
It follows from the considerations in \Cref{section: proofs} that this is a proper inner product, and that $D(A) = \dot{H}^3 \times \dot{H}^{3/2} $ is a suitable domain for A, i.e., $A: D(A) \to H_E$ is continuous. The $H_E$ norm is a physical energy norm since the term $S(\nabla \eta,\nabla \eta)_{L^2} $ represents the elastic energy due to the surface tension and the terms $g(\eta,\eta)_{L^2}$ and $ (\mathcal{G}\varphi,\varphi)_{L^2} = \int_{\Omega_P} V^2 \mathrm{d}X$ are proportional to the potential and kinetic energy of the water, respectively. Next, note that 
\begin{align*}
    \langle U,V \rangle_E &= S(\nabla u_1,\nabla v_1)_{L^2} + g(u_1,v_1)_{L^2} +(\mathcal{G}u_2,v_2)_{L^2} \\
     &= -(u_1,\mathcal{L}v_1)_{L^2} + (u_2,\mathcal{G}v_2)_{L^2}, \quad U = \begin{pmatrix} u_1 \\ u_2 \end{pmatrix}, V = \begin{pmatrix} v_1 \\ v_2 \end{pmatrix}  \in D(A). 
\end{align*}
Since $\dot{U} = AU$, we see that $\|U\|_E^2 = \langle U,U \rangle_E^2 $ is conserved. Assuming $U$ solves \eqref{craig-sulem}, we have
\begin{align*}
    \frac{\mathrm{d}}{\mathrm{d}t}\left(\frac{1}{2}\|U\|_E^2\right) = \langle \dot{U},U \rangle_E  = -(\mathcal{G}u_2,\mathcal{L}u_1)_{L^2} + (\mathcal{L}u_1,\mathcal{G}u_2)_{L^2} = 0. 
\end{align*}

We are now in a position to show well-posedness of \eqref{craig-sulem}. The proof, which follows the semigroup approach used in \cite{reid1995control}, is found in \Cref{section: proofs}. 
\begin{theorem}
    For each $U_0 = (\eta_0,\varphi_0) \in H_E$, the water waves system \eqref{craig-sulem} has a unique solution $U = (\eta,\varphi)^\top \in C([0,\infty);H_E)$ that satisfies $\|U(t)\|_E = \|U_0\|_E $ for all $t \geq 0$. In addition,
    \begin{equation}
         U(t) = \sum_{m,n \geq 0 } \cos(\omega_{m,n}t)\psi_{m,n}\begin{bmatrix}(\eta_0,\psi_{m,n})_{L^2} \vspace{3mm}\\ (\varphi_0,\psi_{m,n})_{L^2} \end{bmatrix} 
         + \sin(\omega_{m,n}t)\psi_{m,n}\begin{bmatrix} \frac{\omega_{m,n}}{g + Sk^2_{m,n}}(\varphi_0,\psi_{m,n})_{L^2} \vspace{3mm}\\ -\frac{g + Sk_{m,n}^2}{\omega_{m,n}}(\eta_0,\psi_{m,n})_{L^2}\end{bmatrix},
        \label{solution}
    \end{equation}
where 
\begin{equation}
    \omega_{m,n} = \sqrt{(gk_{m,n} + Sk_{m,n}^3)\tanh(k_{m,n}H)})  \quad \text{with} \quad  k_{m,n} = \pi\sqrt{(m/L)^2 + (n/W)^2}
    \label{dispersion}
\end{equation} 
is the dispersion relation for gravity-capillary waves. Moreover, for $(\eta_0,\varphi_0) \in \dot{H}^{s +\frac{1}{2}} \times \dot{H}^{s}$ with $s \geq 1/2,$ we have that 
$$ \eta \in \dot{H}^{s +\frac{1}{2}} \quad \text{and} \quad \varphi \in \dot{H}^{s} \quad \text{for all } t \geq 0. $$

\label{existence}
    
\end{theorem}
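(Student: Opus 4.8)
The plan is to use the explicit spectral decomposition of $\mathcal G$ and $\mathcal L$ to reduce \eqref{craig-sulem} to a family of decoupled $2\times 2$ ODEs, solve these by hand to obtain \eqref{solution}, and then read off conservation of energy and the regularity statement mode by mode; uniqueness will come from skew-adjointness of $A$ and Stone's theorem, as announced after \eqref{craig-sulem}. Concretely, writing $U(t)=\sum_{m+n>0}\psi_{m,n}\big(\hat\eta_{m,n}(t),\hat\varphi_{m,n}(t)\big)^{T}$ and using $\mathcal G\psi_{m,n}=g_{m,n}\psi_{m,n}$ with $g_{m,n}=k_{m,n}\tanh(k_{m,n}H)$ together with $\mathcal L\psi_{m,n}=-\ell_{m,n}\psi_{m,n}$ with $\ell_{m,n}=g+Sk_{m,n}^{2}$, the system becomes, for each $(m,n)$,
\[
\frac{\mathrm d}{\mathrm dt}\begin{pmatrix}\hat\eta_{m,n}\\ \hat\varphi_{m,n}\end{pmatrix}=\begin{pmatrix}0 & g_{m,n}\\ -\ell_{m,n} & 0\end{pmatrix}\begin{pmatrix}\hat\eta_{m,n}\\ \hat\varphi_{m,n}\end{pmatrix}.
\]
This matrix has eigenvalues $\pm i\omega_{m,n}$ with $\omega_{m,n}^{2}=g_{m,n}\ell_{m,n}=(gk_{m,n}+Sk_{m,n}^{3})\tanh(k_{m,n}H)>0$, which is exactly \eqref{dispersion}; solving each planar system and reassembling yields precisely \eqref{solution}, hence existence of a candidate solution.

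The crux is to check, simultaneously, convergence in $H_E$ and conservation of energy. Since $\|U\|_E^{2}=\sum_{m+n>0}\big(\ell_{m,n}|\hat\eta_{m,n}|^{2}+g_{m,n}|\hat\varphi_{m,n}|^{2}\big)$, I would substitute \eqref{solution} into a single modal term and expand: the relations $\omega_{m,n}^{2}/\ell_{m,n}=g_{m,n}$ and $g_{m,n}\ell_{m,n}^{2}/\omega_{m,n}^{2}=\ell_{m,n}$ make the $\cos^{2}$ and $\sin^{2}$ contributions recombine into $\ell_{m,n}|\hat\eta_{m,n}(0)|^{2}+g_{m,n}|\hat\varphi_{m,n}(0)|^{2}$, while the two cross terms, each equal to $\pm 2\omega_{m,n}\cos(\omega_{m,n}t)\sin(\omega_{m,n}t)\,\hat\eta_{m,n}(0)\hat\varphi_{m,n}(0)$, cancel. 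Thus each modal energy is constant in $t$, so $\|U(t)\|_E=\|U_0\|_E<\infty$ and the series lies in $H_E$ for every $t$; moreover the $H_E$-norm of the tail $\sum_{m+n>N}$ equals $\sum_{m+n>N}\big(\ell_{m,n}|\hat\eta_{m,n}(0)|^{2}+g_{m,n}|\hat\varphi_{m,n}(0)|^{2}\big)$, which is independent of $t$ and tends to $0$ as $N\to\infty$, so the partial sums converge to $U$ uniformly on $[0,\infty)$. As each partial sum is continuous in $t$, we conclude $U\in C([0,\infty);H_E)$.

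For uniqueness I would invoke semigroup theory. A short computation using the self-adjointness of $\mathcal G$ and $\mathcal L$ (the polarisation of the energy identity in the excerpt) shows that $A$ is skew-symmetric on $D(A)$ for $\langle\cdot,\cdot\rangle_E$; $D(A)$ is dense (it contains all finite modal combinations); and the spectral decomposition shows that $A\pm I$ maps onto $H_E$ (solve $\big(\begin{smallmatrix}1 & g_{m,n}\\ -\ell_{m,n} & 1\end{smallmatrix}\big)$ mode by mode, the solution lying in $D(A)$ by the asymptotics of $g_{m,n},\ell_{m,n}$). Hence $A$ is skew-adjoint and, by Stone's theorem, generates a unitary $C_0$-group $e^{tA}$ on $H_E$; the solution is therefore unique, and $e^{tA}U_0$ coincides with \eqref{solution} since both solve the decoupled ODEs mode by mode. (Alternatively, uniqueness follows directly: projecting any $H_E$-solution onto $\psi_{m,n}$ reduces to uniqueness for the $2\times 2$ linear ODE above.)

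It remains to propagate Sobolev regularity. For $(\eta_0,\varphi_0)\in\dot H^{s+1/2}\times\dot H^{s}$ with $s\ge 1/2$ (so the data already lie in $H_E$, and the statement is a genuine gain when $s>1/2$), bound $|\hat\eta_{m,n}(t)|^{2}\le 2|\hat\eta_{m,n}(0)|^{2}+2(g_{m,n}/\ell_{m,n})|\hat\varphi_{m,n}(0)|^{2}$ and $|\hat\varphi_{m,n}(t)|^{2}\le 2|\hat\varphi_{m,n}(0)|^{2}+2(\ell_{m,n}/g_{m,n})|\hat\eta_{m,n}(0)|^{2}$. Using $k_{m,n}\asymp(m^{2}+n^{2})^{1/2}$, $\tanh(k_{m,n}H)\le 1$, $\ell_{m,n}\ge g>0$, and that $k_{m,n}\ge \pi/L>0$ is bounded below, one checks $g_{m,n}/\ell_{m,n}\lesssim(m^{2}+n^{2})^{-1/2}$ and $\ell_{m,n}/g_{m,n}\lesssim(m^{2}+n^{2})^{1/2}$ uniformly in $m+n>0$; multiplying these bounds by $(m^{2}+n^{2})^{s+1/2}$ and $(m^{2}+n^{2})^{s}$ respectively and summing then gives $\eta(t)\in\dot H^{s+1/2}$ and $\varphi(t)\in\dot H^{s}$ for all $t$, with bounds uniform in $t$ (which by the same tail argument also yields continuity into these spaces). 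I expect the only genuine care to be needed in this last step — matching the Fourier multipliers of $\mathcal G$ and $\mathcal L$ to the correct Sobolev weights, and in particular using that the spectrum $\{k_{m,n}\}$ stays away from $0$ — together with pinning down $D(A)$ so that the Stone's-theorem step is fully rigorous.
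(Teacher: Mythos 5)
Your proposal is correct and follows essentially the same route as the paper: skew-adjointness of $A$ plus Stone's theorem for uniqueness and norm conservation, an explicit spectral mode-by-mode solution yielding \eqref{solution}, and the multiplier asymptotics $\omega_{m,n}/(g+Sk_{m,n}^2)\sim(m^2+n^2)^{-1/4}$ for propagating Sobolev regularity, the only cosmetic difference being that you solve the decoupled $2\times2$ ODEs directly where the paper diagonalizes $A$ with biorthogonal eigenvectors $V_{m,n}^{\pm}$. Incidentally, your final bound for $\|\varphi(t)\|_{\dot{H}^{s}}$ in terms of $\|\varphi_0\|_{\dot{H}^{s}}+\|\eta_0\|_{\dot{H}^{s+\frac{1}{2}}}$ is the correct one; the paper's corresponding display writes $\|\varphi_0\|_{\dot{H}^{s+\frac{1}{2}}}$ where $\|\eta_0\|_{\dot{H}^{s+\frac{1}{2}}}$ is meant.
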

\emph{Note:} To ease notation later, the sum above includes $(m,n) = (0,0)$, although the term is always zero since $(\psi_{0,0},f)_{L^2} = 0$ for $f \in \dot{L}^2$.

With the well-posedness of the forward problems established, we are ready to tackle the inverse problem.  

\begin{figure}[H]
     \centering
     \begin{subfigure}[b]{0.45\textwidth}
         \centering
         \includegraphics[scale = 0.23]{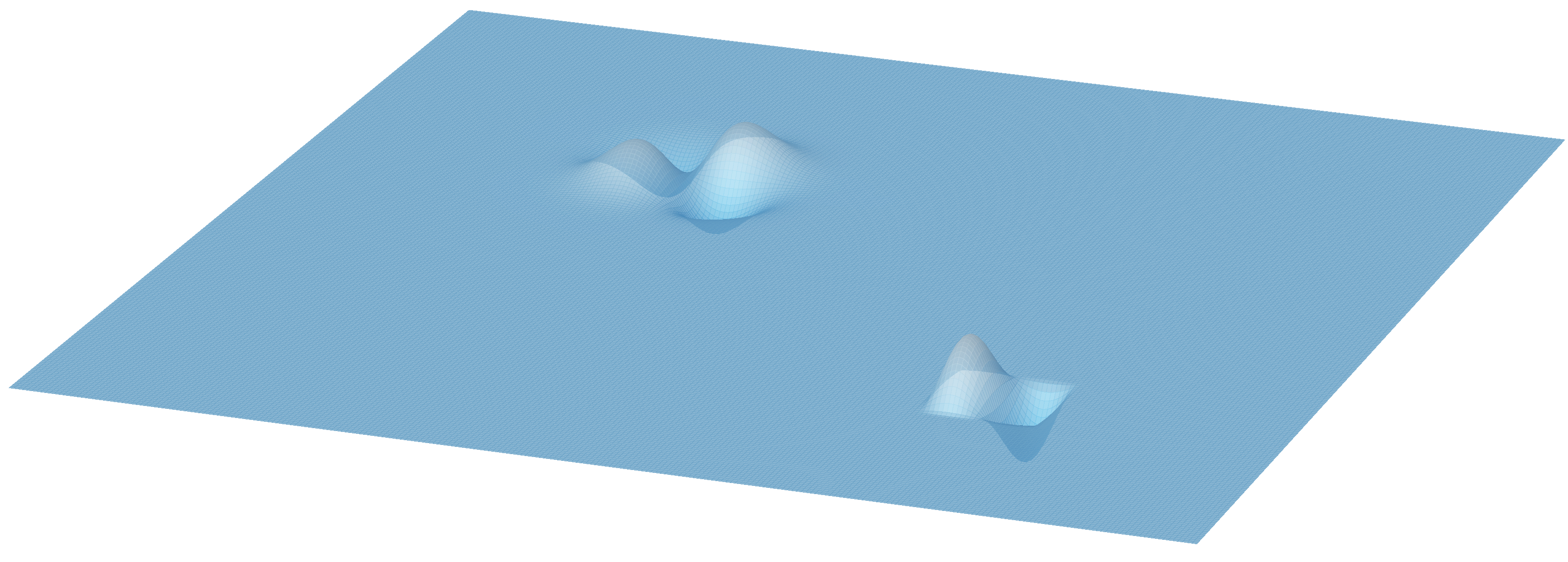}
         \caption{$t=0$}
         \label{fig:t=0}
     \end{subfigure}
     \hspace{5mm}
\begin{subfigure}[b]{0.45\textwidth}
         \centering
         \includegraphics[scale = 0.23]{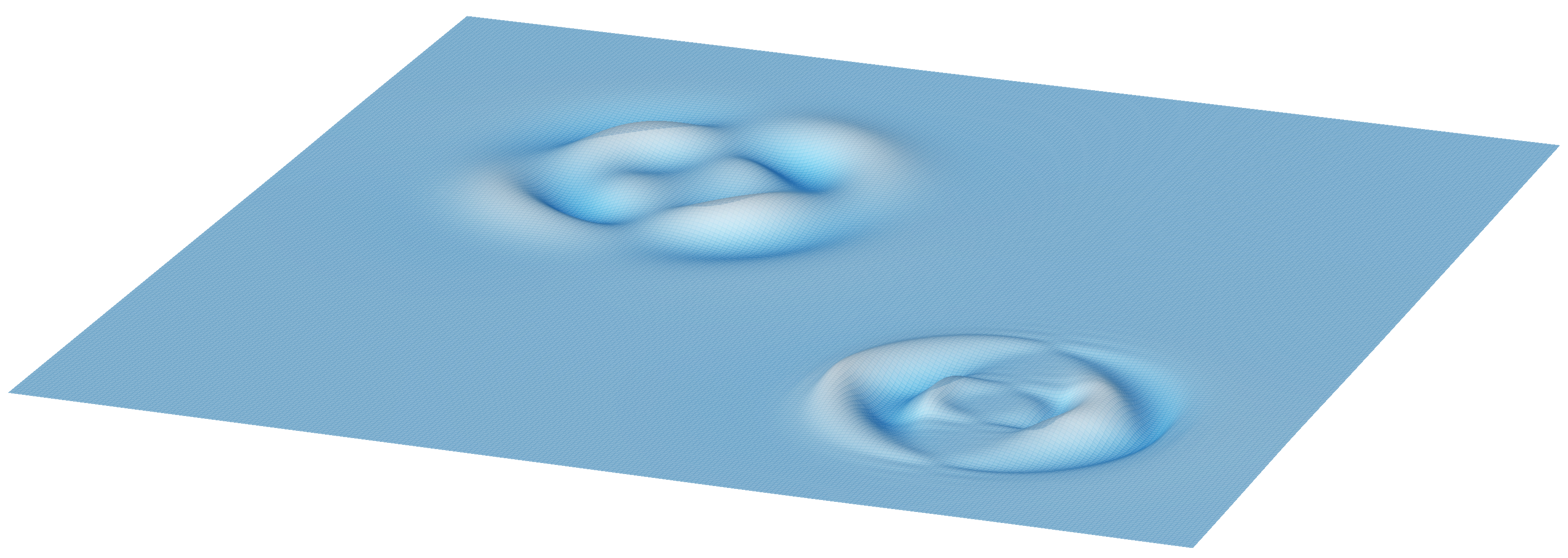}
         \caption{$t = 0.5$}
         %\label{fig:three sin x}
     \end{subfigure}
     \hspace{5mm}
 \begin{subfigure}[b]{0.45\textwidth}
         \centering
         \includegraphics[scale = 0.23]{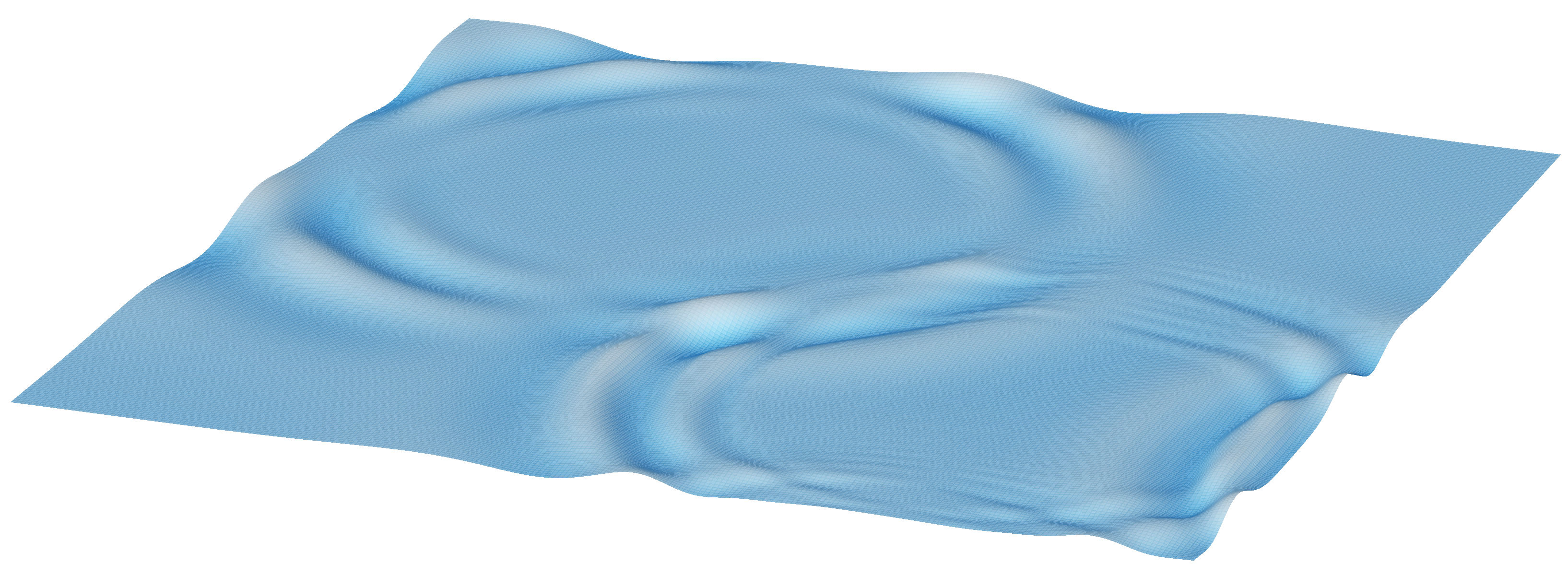}
         \caption{$t = 1.5$}
         %\label{fig:five over x}
     \end{subfigure}
     \hspace{5mm}
     \begin{subfigure}[b]{0.45\textwidth}
         \centering
         \includegraphics[scale = 0.23]{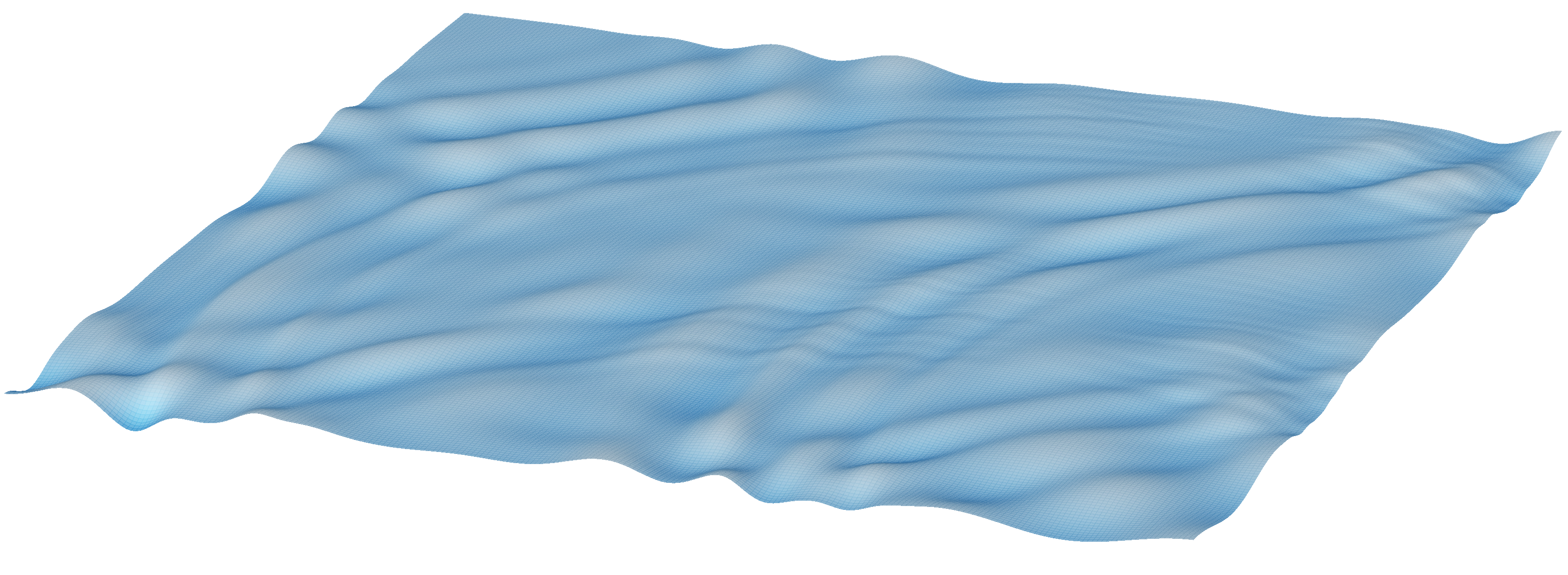}
         \caption{$t=3.5$}
         %\label{fig:five over x}
     \end{subfigure}
        \caption{The figure shows four snapshots of the wave amplitude $\eta$, with initial conditions and parameters as given in \Cref{sec:numerical experiments}.} 
        \label{fig:wave sim}
\end{figure}

\section{The inverse problem}
We now analyze the inverse problem for several different measurements. We consider the following problems:
 \begin{itemize}
     \item[1:] Determination of the initial disturbance from measurement on a small domain. 
     \item[2:] Stability of reconstructing the initial disturbance from measurements along the sides of the pool.  
     \item[3:] Stability of reconstruction of the initial disturbance from discrete measurements. 
 \end{itemize}

\subsection{Problem 1: Determination of the initial disturbance for Feynman's clever insect}
\label{sec:uniqueness}
We assume the measurement domain $\Gamma_M $ is a non-empty, open subset of the surface $\Gamma_S$, and let $I_T =[t_0,t_1]$ be the measurement time interval. Let the measurement be $\mathcal{M}(X,t) = (\eta(X,t),\nabla_X\varphi(X,t))$ with $ (X,t) \in \Gamma_M\times I_T$.    
\begin{theorem}
    Assume $(\eta_0,\varphi_0) \in \dot{H}^{\frac{3}{2}} \times \dot{H}^1$. Then the measurement $\mathcal{M}(X,t)$ uniquely determines the initial disturbance $(\eta_0,\varphi_0)$. 
    \label{uniqueness}
\end{theorem}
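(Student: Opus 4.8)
The plan is to exploit the non-local coupling between the surface measurement and the bulk velocity potential: the data $\mathcal{M}$ on the patch $\Gamma_M$ will be turned into Cauchy data for the harmonic function $\phi(\cdot,t)$ on a piece of $\partial\Omega_P$, after which unique continuation forces $\phi$ to be constant in the whole pool, and the rest is algebra and the group property. By linearity of \eqref{pool}, it suffices to show that if $(\eta,\varphi)$ solves \eqref{pool} with $\eta\big|_{\Gamma_M\times I_T}=0$ and $\nabla_X\varphi\big|_{\Gamma_M\times I_T}=0$, then $(\eta_0,\varphi_0)=0$; and we may shrink $\Gamma_M$ to a connected open set. Since $(\eta_0,\varphi_0)\in\dot H^{3/2}\times\dot H^1$ is the case $s=1$ of Theorem \ref{existence}, we have $\eta(t)\in\dot H^{3/2}$, $\varphi(t)\in\dot H^1$ and the associated $\phi(\cdot,t)\in H^2(\Omega_P)$ for all $t$, and $t\mapsto U(t)$ is smooth (indeed analytic) by the spectral representation \eqref{solution}, so all the trace and time-derivative manipulations below are licit.

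First I would extract Cauchy data on $\Gamma_M$. Because $\eta\equiv 0$ on the open set $\Gamma_M\times I_T$, every $X$- and $t$-derivative of $\eta$ vanishes there; in particular $\partial_t\eta=0$ and $\Delta_X\eta=0$ on $\Gamma_M\times I_T$. Substituting into the first two equations of \eqref{pool} gives $\partial_z\phi\big|_{z=0}=\partial_t\eta=0$ and $\partial_t\varphi=S\Delta_X\eta-g\eta=0$ on $\Gamma_M\times I_T$. Combined with the hypothesis $\nabla_X\varphi=0$, this shows $\varphi\equiv c$ on $\Gamma_M\times I_T$ for a single constant $c$ ($X$-independent since $\Gamma_M$ is connected and $\nabla_X\varphi=0$, $t$-independent since $\partial_t\varphi=0$). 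Since $\varphi=\phi\big|_{z=0}$, for each fixed $t\in I_T$ the harmonic function $\phi(\cdot,t)-c$ on $\Omega_P$ has vanishing Dirichlet and Neumann traces on the open subset $\Gamma_M$ of $\partial\Omega_P$.

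Next I would invoke unique continuation. As $\Delta_{X,z}$ is elliptic (no real characteristic hypersurfaces), Holmgren's theorem — or, more elementarily, real-analyticity of harmonic functions together with the connectedness of $\Omega_P$ — forces $\phi(\cdot,t)\equiv c$ in $\Omega_P$ for each $t\in I_T$. Hence $\varphi=\phi\big|_{z=0}\equiv c$ on all of $\Gamma_S$ for $t\in I_T$, and since $\varphi(t)\in\dot L^2$ has zero mean we get $c=0$, i.e. $\varphi\equiv 0$ on $\Gamma_S\times I_T$. Feeding this back into the second equation of \eqref{pool} yields $\mathcal{L}\eta=S\Delta_X\eta-g\eta=-\partial_t\varphi=0$ on $\Gamma_S\times I_T$, and invertibility of $\mathcal{L}$ (shown in the subsection on $\mathcal{G}$ and $\mathcal{L}$) gives $\eta\equiv 0$ there as well. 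Thus $U(t_0)=(\eta,\varphi)(t_0)=0$ in $H_E$; by Theorem \ref{existence}, $U(t)=\mathbb{T}_tU_0$ for a unitary (hence invertible) group $\mathbb{T}_t$, so $U_0=\mathbb{T}_{-t_0}U(t_0)=0$, i.e. $(\eta_0,\varphi_0)=0$.

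The main obstacle I anticipate is the unique continuation step at the regularity actually available: one must check that $\phi(\cdot,t)\in H^2(\Omega_P)$ with vanishing Cauchy data on the flat patch $\Gamma_M\subset\{z=0\}$ genuinely propagates to $\phi\equiv c$ — e.g. by an even reflection across $\{z=0\}$ that is legitimate precisely because the Neumann data vanishes, or by a direct Carleman/Holmgren argument — and that the trace identities of the second paragraph hold pointwise in $t$ rather than merely distributionally, which is guaranteed by the smoothness of $U(t)$ coming from \eqref{solution} on this data class. Everything else is routine. Conceptually this is exactly the non-locality advertised in the introduction: the Dirichlet-to-Neumann operator broadcasts the small, localized surface observation into the bulk, where the rigidity of harmonic functions finishes the job.
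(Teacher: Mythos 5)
Your argument is correct and follows essentially the same route as the paper: convert the vanishing measurement into vanishing Cauchy data for the harmonic potential on the surface patch, apply unique continuation to force $\phi$ constant and hence $\varphi=0$, deduce $\mathcal{L}\eta=0$ so $\eta=0$, and conclude via time-reversibility. The only cosmetic differences are that you justify unique continuation by Schwarz reflection/analyticity (or Holmgren) where the paper cites a result for convex domains, and you invoke the unitary group's invertibility where the paper uses energy conservation — these are equivalent.
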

For waves in the linear regime, the answer to Feynman's question is therefore positive.
\begin{corollary}
A sufficiently clever insect can determine the initial disturbance in the pool from the measurement $\mathcal{M}$. 
\end{corollary}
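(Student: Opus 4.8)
The plan is to combine linearity of the solution map with unique continuation for harmonic functions. Since $(\eta_0,\varphi_0)\mapsto\mathcal{M}$ is linear and, by Theorem~\ref{existence}, well-defined on $\dot{H}^{3/2}\times\dot{H}^1$, it suffices to show that $\mathcal{M}\equiv 0$ on $\Gamma_M\times I_T$ forces $(\eta_0,\varphi_0)=0$. Since $\Gamma_M$ is open and $t_0<t_1$, I would first fix an open space--time cylinder $Q_\varepsilon=B_\varepsilon\times I_\varepsilon\subset\Gamma_M\times(t_0,t_1)$, with $B_\varepsilon$ a small surface ball, on which $\eta\equiv0$ and $\nabla_X\varphi\equiv0$.

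The first step is to propagate the vanishing from the surface patch into the interior of the pool. On $B_\varepsilon$ the condition $\nabla_X\varphi=0$ makes $\varphi$ spatially constant, and the second equation of \eqref{pool} with $\eta=0$ gives $\partial_t\varphi|_{z=0}=0$, so $\varphi\equiv C_\varepsilon$ on $Q_\varepsilon$ for a single constant $C_\varepsilon$; the first equation then gives $\partial_z\phi|_{z=0}=\partial_t\eta=0$ there as well. Hence, for each $t\in I_\varepsilon$, the harmonic function $\phi(\cdot,\cdot,t)-C_\varepsilon$ has vanishing Dirichlet and Neumann data on the boundary piece $B_\varepsilon\subset\Gamma_S$ of the convex domain $\Omega_P$. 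Invoking a boundary unique continuation theorem for harmonic functions --- vanishing Cauchy data on a boundary subset of positive surface measure forces the function to vanish identically --- I conclude $\phi\equiv C_\varepsilon$ in $\Omega_P$, hence $\varphi\equiv C_\varepsilon$ on all of $\Gamma_S$; the zero-mean constraint $\varphi\in\dot{L}^2$ then forces $C_\varepsilon=0$.

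With $\varphi\equiv0$ on $\Gamma_S\times I_\varepsilon$, the two surface equations collapse: $\partial_t\eta=\mathcal{G}\varphi=0$ and $0=\partial_t\varphi=\mathcal{L}\eta$, and since the eigenvalues of $\mathcal{L}$ are $-(g+Sk_{m,n}^2)\neq0$ it is invertible, so $\eta\equiv0$ on $\Gamma_S\times I_\varepsilon$ too. Choosing any $t^*\in I_\varepsilon$ gives $(\eta(t^*),\varphi(t^*))=0$, and the energy identity $\|(\eta_0,\varphi_0)\|_E=\|(\eta(t^*),\varphi(t^*))\|_E$ from Theorem~\ref{existence} (equivalently, invertibility of the unitary group generated by $A$) yields $(\eta_0,\varphi_0)=0$. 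The corollary about the insect then follows immediately.

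I expect the unique continuation step to be the main obstacle: the remaining arguments are soft manipulations of the PDE system, but the passage from ``the Cauchy data vanish on a tiny surface patch'' to ``$\phi$ vanishes throughout the pool'' is precisely where the non-locality --- harmonicity of the velocity potential --- does the essential work. Some care is also needed with regularity, since one must know that $\varphi\in\dot{H}^1$ produces $\phi\in H^{3/2}(\Omega_P)$, so that the traces $\phi|_{\Gamma_S}$ and $\partial_z\phi|_{\Gamma_S}$ are well-defined and the cited unique continuation result genuinely applies --- this is what the hypothesis $(\eta_0,\varphi_0)\in\dot{H}^{3/2}\times\dot{H}^1$ buys.
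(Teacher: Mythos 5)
Your argument is correct and follows essentially the same route as the paper: the corollary is read off from the uniqueness theorem, whose proof reduces the vanishing of $\mathcal{M}$ on a small space--time cylinder to vanishing Cauchy data for the harmonic potential on a surface patch, applies boundary unique continuation on the convex domain $\Omega_P$ to get $\varphi\equiv 0$, kills $\eta$ via the invertibility of $\mathcal{L}$ (equivalently, $-g/S$ is not a Neumann eigenvalue of $\Delta_X$), and concludes by energy conservation and linearity. Your closing remarks on where the non-locality enters and on the regularity needed for the traces match the paper's own discussion.
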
 
\begin{remark}
    The above result might seem strange when considering the limitations to inverse problems for other types of waves, e.g., acoustic and electromagnetic waves. For the two latter types of waves, information propagates at a finite speed. Hence, for any finite measurement time, an initially localized disturbance will not even be detectable by an observer positioned sufficiently far away.
    
    Mathematically, this is seen by considering the Cauchy problem for the wave equation in $\mathbb{R}^n$,  
    $$(\partial_t^2 - c^2\Delta) u = 0, \quad (u(0),\partial_t u(0)) = (u_0,u_1).$$ 
    This is equation describes the propagation of acoustic and electromagnetic waves in a homogeneous medium with wave speed $c$. Huygens' principle says that if $u_0(X) = u_1(X) =  0$ outside some ball $B_R(0)$ with radius $R$, then $u(t,X)) = 0 $ outside of the ball $B_{R + ct}$ for all $t \geq 0$ (cf. Ch. 6 in \cite{craig2018course}).

    The result also has the strange consequence that if the water is still on some open set on the surface for a tiny duration of time, then the whole body of water must have been still everywhere and for all time. 
    
    As becomes clear in the proof, it is the non-local behavior of the DN operator that is the mechanism enabling this local uniqueness property; due to the modeling of the fluid as incompressible and irrotational, the velocity potential is harmonic. This allows for the application of the unique continuation property, but it is still surprising that this property propagates enough information to ensure the unique determination of the initial disturbance. For a physical intuition of this effect, one can think of stepping into a bathtub; as one descends into the water, the water level appears to increase by the same amount everywhere and at once. Apparently, the local disturbance manifests itself globally and instantly.  
    
    We note that the unique continuation property has been established for various water wave models before (cf. \cite{kenig2020unique,kenig2020uniqueness,zhang1992unique}), and for the Schrödinger equation, which is also a dispersive PDE \cite{tenenbaum2009fast}. Moreover, there has recently been much interest in inverse problems related to fractional PDEs \cite{ruland2020fractional,kaltenbacher2023inverse}. Fractional differential operators are non-local operators and for some of the inverse problems considered for such operators, unique continuation properties and local uniqueness results can be established (cf. \cite{covi2023uniqueness,ghosh2020calderon,ghosh2020uniqueness}).
\end{remark}

\begin{proof}
The proof relies on the energy estimate in  \cref{existence} and the unique continuation property from Cauchy data for elliptic PDEs, and we paraphrase here Corollary 2.22 in \cite{choulli2016applications}: 
\emph{Let $\Omega \subset \mathbb{R}^n$ be a Lipschitz domain and assume $\Delta u = 0$ in $\Omega$. Let $\Gamma$ be a non-empty open subset of $\partial \Omega$. Then $u|_\Gamma = \partial_\nu u|_\Gamma = 0$ implies that $ u = 0 \text{ in } \Omega$.} \\

     By \cref{existence}, the measurement is well-defined in the sense that $\int_{\Gamma_S} \mathcal{M} h \mathrm{d}X$ is bounded for all $h \in C^\infty(\Gamma_S)$. Now, assume $\mathcal{M}(X,t) = 0$ for $(X,t) \in \Gamma_M\times I_T$. Then $\dot{\eta} = 0$ in some space-time cylinder $Q_\varepsilon = B_\epsilon \times I_\varepsilon  \subset \Gamma_M \times I_T$, and by \eqref{craig-sulem} this implies that $\mathcal{G}\varphi = 0 $ and $\dot{\varphi}= 0$ in $Q_\varepsilon$.  Moreover, since $\nabla_X \varphi = 0$ in $Q_\varepsilon$, we must have $\varphi|_{Q_\varepsilon} = C_\varepsilon$ for some constant $C_\varepsilon$. But this implies $\phi = C_\varepsilon$. Assume there is some $\tilde{\phi} \neq C_\varepsilon$, and such that $\tilde{\phi}|_{Q_\varepsilon} = C_\varepsilon $ and $\partial_z \tilde{\phi}|_{Q_\varepsilon} = 0$. Then $\tilde{\phi} - C_\varepsilon$ is harmonic in $\Omega_P$ and satisfies ${(\tilde{\phi} - C_\varepsilon)|_{Q_\varepsilon} = \partial_z(\tilde{\phi} -C_\varepsilon)|_{Q_\varepsilon} = 0}$. We can now apply the unique continuation result and conclude that  $\tilde{\phi} = C_\varepsilon$ in $\Omega_P$. Therefore  $\varphi = C_\varepsilon$, and since $\varphi \in \dot{L}^2 $, $\varphi = 0$. Hence, with $\dot{\varphi} = 0 $, we have that $(S\Delta_X - g)\eta = 0$ on $\Gamma_S$. But since $g/S > 0$, the only solution to $-\Delta_X \eta = -\frac{g}{S}\eta$ on $\Gamma_S$ with homogeneous Neumann conditions is $\eta = 0$. By \cref{existence} we have $  \|(\eta(X,t),\varphi(X,t))\|_E = \|(\eta_0,\varphi_0)\|_E $ for all $t \geq 0$ and hence $(\eta_0,\varphi_0) = 0$, and since the map $(\eta_0,\varphi_0) \mapsto \mathcal{M}$ is linear, the conclusion now follows.  
\end{proof}

\Cref{uniqueness} result says nothing about the stability of the procedure of inversion. In fact, unique continuation is known to be a severely ill-posed problem \cite{elcrat2012stability}. Hence, an observer who makes imperfect measurements would most likely not succeed with the task of computing the initial conditions. In the next section, we investigate the stability properties of the inverse problem. 

\subsection{Problem 2: Uniqueness and stability from measurements along two adjacent sides of the pool}
\label{sec:stability}

From the study of the observability of wave equations, it is well known that the part of the boundary where one measures the wave should satisfy certain geometric conditions to yield stability and uniqueness \cite{bardos1992sharp,observation}. Here, we utilize the explicit solution in \cref{existence} and a so-called spectral observability method used in  \cite{tenenbaum2009fast,komornik2014cross,observation}. 
Assuming that we measure the wave amplitude along two adjacent sides of the pool, i.e., $$ \mathcal{M}(X,t) = \eta(X,t)|_{X \in \Gamma_M }, \quad \text{with }   \Gamma_M = \left\{(x_1,0): x_1 \in (0,L) \right\} \cup \left\{(0, x_2): x_2 \in (0,W) \right\},   $$
we obtain the following result: 
\begin{theorem}
Assume the measurement time satisfies 
\begin{equation}
    T \geq  \frac{6L\sqrt{L^{2} + W^{2}} }{ W}\left(\frac{Hg}{2\sqrt{1+\frac{H^2g}{3S}}} - \frac{3S}{2H}\right)^{-1/2}.
    \label{Tbound}
\end{equation}
Then there is a constant $C$, independent of $(\eta_0,\varphi_0)$, such that following stability estimate holds 
\begin{equation}
    \int_0^T \int_{\Gamma_M} |\mathcal{M}(X,t)|^2 \mathrm{d}X\mathrm{d}t \geq C T \left(\|\eta_0\|_{L^2}^2 + \|\varphi_0\|_{\dot{H}^{-\frac{1}{2}}}^2\right).
\end{equation}
   \label{stability} 
\end{theorem}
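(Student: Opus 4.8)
The plan is to run the spectral observability scheme of \cite{komornik2014cross,observation,tenenbaum2009fast}: start from the explicit series \eqref{solution}, expand the trace of $\eta$ on each of the two sides of $\Gamma_M$ as a one–dimensional non-harmonic Fourier series, and apply an Ingham-type inequality to each of them. Using \eqref{solution} and \eqref{dispersion}, the measured component can be written as
\[
\eta(X,t)=\sum_{m,n\ge 0}\bigl(a_{m,n}^{+}e^{\,i\omega_{m,n}t}+a_{m,n}^{-}e^{-i\omega_{m,n}t}\bigr)\psi_{m,n}(X),\qquad a^{\pm}_{m,n}=\tfrac12\Bigl(\hat\eta_{0,m,n}\mp\tfrac{i\,\omega_{m,n}}{g+Sk_{m,n}^{2}}\hat\varphi_{0,m,n}\Bigr),
\]
with $\hat u_{m,n}=(u,\psi_{m,n})_{L^2}$, so that $|a^{+}_{m,n}|^{2}+|a^{-}_{m,n}|^{2}=\tfrac12\bigl(|\hat\eta_{0,m,n}|^{2}+\tfrac{\omega_{m,n}^{2}}{(g+Sk_{m,n}^{2})^{2}}|\hat\varphi_{0,m,n}|^{2}\bigr)$; since $\tfrac{\omega_{m,n}^{2}}{(g+Sk_{m,n}^{2})^{2}}=\tfrac{k_{m,n}\tanh(k_{m,n}H)}{g+Sk_{m,n}^{2}}\asymp(m^{2}+n^{2})^{-1/2}$ by the two-sided bounds on $\mathcal G$ and $\mathcal L$ already used above, one gets $\sum_{m,n}(|a^{+}_{m,n}|^{2}+|a^{-}_{m,n}|^{2})\asymp\|\eta_{0}\|_{L^{2}}^{2}+\|\varphi_{0}\|_{\dot H^{-1/2}}^{2}$, which is exactly the right-hand side of the asserted estimate. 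Writing $\Gamma_M=\Gamma_1\cup\Gamma_2$ with $\Gamma_1=\{(x_1,0)\}$, $\Gamma_2=\{(0,x_2)\}$, and using $\psi_{m,n}(x_1,0)=C_{m,n}\cos(\pi m x_1/L)$, $\psi_{m,n}(0,x_2)=C_{m,n}\cos(\pi n x_2/W)$, orthogonality of the cosines on $(0,L)$ and $(0,W)$ yields
\[
\int_0^T\!\!\int_{\Gamma_M}|\eta|^2\,\mathrm{d}X\,\mathrm{d}t\ \gtrsim\ \sum_{m\ge0}\int_0^T|\alpha_m(t)|^2\,\mathrm{d}t+\sum_{n\ge0}\int_0^T|\beta_n(t)|^2\,\mathrm{d}t,
\]
where $\alpha_m(t)=\sum_{n\ge0}C_{m,n}\bigl(a^{+}_{m,n}e^{i\omega_{m,n}t}+a^{-}_{m,n}e^{-i\omega_{m,n}t}\bigr)$ and $\beta_n$ is defined analogously with the roles of $m$ and $n$ interchanged.

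The heart of the matter is a uniform Ingham inequality for the $\alpha_m$ and $\beta_n$. Since $\omega_{m,n}=\sqrt{h(k_{m,n})}$ with $h(k)=(gk+Sk^{3})\tanh(kH)$ is strictly increasing and superlinear in $k_{m,n}$, I would split the modes across the diagonal: let $\Gamma_1$ account for the modes with $n\ge m$ and $\Gamma_2$ for those with $m\ge n$ (together these cover $\mathbb N_0^2$). For $n\ge m$ one has $k_{m,n}\le\pi n\sqrt{L^2+W^2}/(LW)$, hence
\[
k_{m,n+1}-k_{m,n}=\frac{\pi^2(2n+1)/W^2}{k_{m,n+1}+k_{m,n}}\ge\frac{\pi L}{W\sqrt{L^2+W^2}},
\]
and, symmetrically, $k_{m+1,n}-k_{m,n}\ge\pi W/(L\sqrt{L^2+W^2})$ for $m\ge n$; the same computation shows that each controlled sub-family is separated from the complementary modes (and from the mirror frequencies $\{-\omega_{m,n}\}$) by a comparable positive amount. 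To turn the $k$-gap into a frequency gap via the mean value theorem one needs a lower bound on the group velocity $\omega'(k)=h'(k)/(2\sqrt{h(k)})$; dropping a positive term in $h'(k)$ and using $\tanh x\ge x/(1+x^2/3)$ gives, after simplification, $\omega'(k)\ge(c_{*})^{1/2}$ with $c_{*}=\tfrac{Hg}{2\sqrt{1+H^{2}g/(3S)}}-\tfrac{3S}{2H}$, positive under the standing hypotheses. Thus the controlled sub-families all have spectral gap at least $(c_{*})^{1/2}\pi W/(L\sqrt{L^2+W^2})$ (the smaller of the two since $W\le L$), so that $T\ge 6\pi\big/\bigl((c_{*})^{1/2}\pi W/(L\sqrt{L^2+W^2})\bigr)=6L\sqrt{L^2+W^2}\big/(W\sqrt{c_{*}})$ — precisely \eqref{Tbound} — suffices for the Ingham inequality, the surplus over the bare $2\pi/\gamma$ absorbing the slack in the group-velocity estimate and in the controlled-versus-complementary separation. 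Applying the one-dimensional Ingham inequality to each controlled sub-family, summing over $m$ and $n$, using that the $C_{m,n}$ and the Parseval constants are bounded below, and substituting $\sum_{m,n}(|a^{+}_{m,n}|^{2}+|a^{-}_{m,n}|^{2})\asymp\|\eta_0\|_{L^2}^2+\|\varphi_0\|_{\dot H^{-1/2}}^2$, one obtains the estimate, with the factor $T$ coming from the linear-in-$T$ lower bound in Ingham's theorem.

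The main difficulty I foresee is exactly this spectral-gap analysis together with its corollary about the cross terms. Unlike the Schrödinger equation on a rectangle, the frequencies $\{\omega_{m,n}\}$ — and even each slice $\{\omega_{m,n}\}_n$ at fixed $m$ — fail to be uniformly separated (for large $m$ the frequencies with small $n$ cluster), so no single side of the pool can control all modes and the diagonal splitting is genuinely needed; one must then check that the Ingham lower bound for the controlled part of $\alpha_m$ survives the presence of the clustered complementary ($n<m$) frequencies in $\alpha_m$. This is where the two sides interact: those complementary modes are themselves controlled modes for $\Gamma_2$, so adding the $\Gamma_1$- and $\Gamma_2$-contributions, using the non-negativity of Ingham's weight function on the "bad–bad" pairings and the smallness (for $T$ as large as in \eqref{Tbound}) of the "good–bad" pairings, lets one close the estimate. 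The remaining ingredients — biorthogonality of $\{\psi_{m,n}\}$ on each side, and the uniform equivalence $\tfrac{\omega_{m,n}^{2}}{(g+Sk_{m,n}^{2})^{2}}\asymp(m^{2}+n^{2})^{-1/2}$, which is what singles out the $\dot H^{-1/2}$-norm on $\varphi_0$ — are routine.
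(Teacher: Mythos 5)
Your proposal is correct and follows essentially the same route as the paper: the explicit spectral solution, orthogonal decomposition onto the two sides, the diagonal splitting of the mode lattice (side one controls $n\ge m$, side two controls $m\ge n$), the group-velocity lower bound $c_{*}$ converting wavenumber gaps into frequency gaps, and a partial Ingham inequality whose penalty term is absorbed upon summing the two sides' contributions. The only cosmetic difference is that the paper invokes the weakened Ingham inequality (Proposition 3.1 in \cite{komornik2014cross}) as a ready-made lemma, whereas you sketch in situ how such an inequality would be proved via the non-negativity of the weight on the uncontrolled pairings and the smallness of the cross terms.
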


\begin{remark} 
The term 
$$\tiny{\left(\frac{Hg}{2\sqrt{1+\frac{H^2g}{3S}}} - \frac{3S}{2H}\right)^{-1/2}}$$
is a lower bound on the group velocity $c_g = \partial_k \omega(k)$ of the water waves. The group velocity is the velocity with which a localized wave group/wavelet propagates, and it is also the velocity of energy transport (cf. \cite{ablowitz2011nonlinear}). The lower bound on the observation time $T$ is given by a term proportional to the maximal length of the pool times the minimal group velocity. This is comparable to similar bounds for acoustic or electromagnetic waves, where the bound is proportional to the minimal wave velocity times the maximal distance (cf. \cite{alberti2018lectures}).  
Moreover, as we see in the proof of \cref{stability}, the presence of surface tension and the resulting capillary waves is necessary to have stability. In fact, for pure gravity waves ($S=0$ in \eqref{dispersion}), the group velocity $c_g(k) = \partial_k \omega(k) \sim k^{-1/2}$ approaches zero as the wavenumber $k$ increases. Consequently, one can have a compactly supported disturbance comprised of modes with frequencies so high that they do not reach the boundary during the measurement time $T \geq 0$. In contrast, this cannot happen for gravity-capillary waves, as $c_g(k) \sim k^{1/2}$. \\
\end{remark}
Equation \eqref{stability} implies uniqueness and stability. If we have a noisy measurement $\Tilde{\mathcal{M}} = \mathcal{M} +\varepsilon$ that we attribute to some initial data $(\tilde{\eta}_0,\tilde{\varphi}_0)$, it shows that the difference between $(\tilde{\eta}_0,\tilde{\varphi}_0)$ and the true initial data $(\eta_0,\varphi_0)$ is bounded by $\varepsilon$, i.e.,  
$$\int_0^T \int_{\Gamma_M}|\varepsilon|^2 \mathrm{d}X\mathrm{d}t \geq C\frac{T}{L\pi} \left(\|\eta_0 - \tilde{\eta}_0\|_{L^2}^2 + \|\varphi_0-\tilde{\varphi}_0\|_{\dot{H}^{-\frac{1}{2}}}^2\right).$$

\subsection*{Additional stability}
 The more precise measurements one is able to make, the more accurate the reconstruction should be. For example, if we can measure the amplitude with such precision that we can compute the (vertical) wave velocity without too much error, or if we are able to measure the velocity or acceleration directly, this information should result in a better reconstruction. We can quantify this by expressing the stability estimate with norms that include derivatives. As a simple consequence of \cref{stability} we get the following corollary.

\begin{corollary}
     Assume $k \in \mathbb{N}_0$, $(\eta_0,\varphi_0) \in \dot{H}^{\frac{3}{2}k}\times \dot{H}^{\frac{3}{2}k -\frac{1}{2}}$, and  that $T$ satisfies the same bound as in \cref{stability}. Then there is a constant $C_k$, independent of $(\eta_0,\varphi_0)$, such that following stability estimate holds
    \begin{equation}
    \int_0^T \int_{\Gamma_M} |\partial_t^k\eta(X,t)|^2 \mathrm{d}X\mathrm{d}t \geq C_k T (\|\eta_0\|_{\dot{H}^{\frac{3}{2}k}}^2 + \|\varphi_0\|_{\dot{H}^{\frac{3}{2}k-\frac{1}{2}}}^2).
\end{equation}
\label{increased stability}
\end{corollary}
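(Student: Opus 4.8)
Proof proposal for Corollary~\ref{increased stability}.

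The plan is to derive the corollary from Theorem~\ref{stability} by applying the latter not to $(\eta,\varphi)$ but to its $k$-th time derivative. Writing $U=(\eta,\varphi)^T$ and recalling $\dot U=AU$ from \eqref{craig-sulem}, the pair $\partial_t^kU=(\partial_t^k\eta,\partial_t^k\varphi)^T$ again satisfies $\partial_t(\partial_t^kU)=A(\partial_t^kU)$, so it is precisely the solution of \eqref{craig-sulem} launched from the transformed datum $\widetilde U_0:=A^kU_0$. First I would make this rigorous by differentiating the explicit series \eqref{solution} term by term in $t$: each $\partial_t$ contributes a factor $\omega_{m,n}$, and since $\omega_{m,n}\asymp(m^2+n^2)^{3/4}$ — a two-sided bound that follows from the dispersion relation \eqref{dispersion} together with the comparison estimates for $\omega_{m,n}/(g+Sk_{m,n}^2)$ and $k_{m,n}\tanh(k_{m,n}H)$ already recorded in Section~\ref{forward problem analysis} and the elementary $k_{m,n}^2\asymp m^2+n^2$ — the hypothesis $(\eta_0,\varphi_0)\in\dot H^{\frac{3}{2}k}\times\dot H^{\frac{3}{2}k-\frac{1}{2}}$ is exactly what makes the differentiated series converge in $C([0,\infty);\dot L^2\times\dot H^{-1/2})$.

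Next I would compute $\widetilde U_0=A^kU_0$ in the basis $\{\psi_{m,n}\}$. Since $\mathcal L\psi_{m,n}=-(g+Sk_{m,n}^2)\psi_{m,n}$ and $\mathcal G\psi_{m,n}=k_{m,n}\tanh(k_{m,n}H)\psi_{m,n}$, one has $\mathcal G\mathcal L\psi_{m,n}=\mathcal L\mathcal G\psi_{m,n}=-\omega_{m,n}^2\psi_{m,n}$, so $A^2$ acts on each $\psi_{m,n}$-mode as multiplication by $-\omega_{m,n}^2$ in both components. Hence for even $k=2j$ the datum $\widetilde U_0$ has coefficients $\big((-1)^j\omega_{m,n}^{2j}\hat\eta_{0;m,n},\,(-1)^j\omega_{m,n}^{2j}\hat\varphi_{0;m,n}\big)$, and for odd $k=2j+1$ it has coefficients $\big((-1)^j\omega_{m,n}^{2j}k_{m,n}\tanh(k_{m,n}H)\hat\varphi_{0;m,n},\,-(-1)^j\omega_{m,n}^{2j}(g+Sk_{m,n}^2)\hat\eta_{0;m,n}\big)$. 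Using $\omega_{m,n}\asymp(m^2+n^2)^{3/4}$, $k_{m,n}\tanh(k_{m,n}H)\asymp(m^2+n^2)^{1/2}$ and $g+Sk_{m,n}^2\asymp(m^2+n^2)$, a direct computation of the $\dot L^2$- and $\dot H^{-1/2}$-norms of $\widetilde U_0$ yields, in both cases,
$$\|\widetilde\eta_0\|_{L^2}^2+\|\widetilde\varphi_0\|_{\dot H^{-1/2}}^2\ \asymp\ \|\eta_0\|_{\dot H^{\frac{3}{2}k}}^2+\|\varphi_0\|_{\dot H^{\frac{3}{2}k-\frac{1}{2}}}^2;$$
in particular the Sobolev indices $\frac{3}{2}k$ and $\frac{3}{2}k-\frac{1}{2}$ in the statement are precisely those for which $\widetilde U_0\in\dot L^2\times\dot H^{-1/2}$ (and for $k=0$ everything collapses to Theorem~\ref{stability} itself).

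To conclude, since $\widetilde U_0$ lies in the class for which Theorem~\ref{stability} is established and $T$ obeys the same lower bound \eqref{Tbound}, I would apply Theorem~\ref{stability} to the solution with initial datum $\widetilde U_0$, identify its first component with $\partial_t^k\eta$, and combine with the norm equivalence above to get
$$\int_0^T\!\!\int_{\Gamma_M}|\partial_t^k\eta(X,t)|^2\,\mathrm{d}X\,\mathrm{d}t\ \geq\ CT\big(\|\widetilde\eta_0\|_{L^2}^2+\|\widetilde\varphi_0\|_{\dot H^{-1/2}}^2\big)\ \geq\ C_kT\big(\|\eta_0\|_{\dot H^{\frac{3}{2}k}}^2+\|\varphi_0\|_{\dot H^{\frac{3}{2}k-\frac{1}{2}}}^2\big),$$
with $C_k$ absorbing the $2k$-th power of the spectral comparison constants. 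I do not expect a genuine analytic obstacle: the substance is already contained in Theorem~\ref{stability}. The only point needing care is the index bookkeeping separating the even and odd cases, which reflects the fact that, because of the gravity--capillary dispersion $\omega\sim k^{3/2}$, each time derivative is traded for exactly $\tfrac{3}{2}$ spatial derivatives on the energy scale; one also has to confirm that term-by-term $t$-differentiation of \eqref{solution} is legitimate, which is exactly the convergence statement checked in the first paragraph.
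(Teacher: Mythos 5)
Your proposal is correct and follows essentially the same route as the paper: the paper also differentiates the measurement series $k$ times in $t$, observes that each derivative multiplies the mode coefficients by $\omega_{m,n}$ with $\omega_{m,n}^{2k}\sim(m^2+n^2)^{3k/2}$, and then reruns the Ingham argument of Theorem \ref{stability} to land on the $\dot H^{\frac{3}{2}k}\times\dot H^{\frac{3}{2}k-\frac{1}{2}}$ norms. Your repackaging of this as ``apply Theorem \ref{stability} to the solution with initial datum $A^kU_0$'' is just a more structural phrasing of the same computation (your modewise formulas for $A^kU_0$ agree with the differentiated coefficients in the paper), so there is nothing further to add.
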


Proofs of the above results are found in \Cref{section: proofs} and are based on an analysis of the dispersion relation and application of non-harmonic Fourier analysis.

\subsection{Problem 3: Discrete measurements and bandlimited reconstructions}
\label{sec:stability-finite}
We now investigate the more practical question of reconstruction from discrete, noisy boundary measurements. By discrete measurements, we mean that the wave amplitude is measured at a finite number of points in time and space, in contrast to the continuum case considered in \cref{uniqueness} and \cref{stability}. This is always the case in any real-world experiment or application, and was recently studied in, e.g., \cite{alberti2022infinite,harrach2021uniqueness}. The experimental setup we consider could be a water tank with transparent walls and a camera recording the waves as they hit the wall. The resulting measurement has the form
$$\{\eta(x_i,0,t_j) : 0\leq i \leq N, 0 \leq j \leq M\}, $$
for some finite $N,M \in \mathbb{N}$. Even in the absence of noise from such measurements, one can in general not obtain reconstructions of an arbitrary initial disturbance. We instead aim to reconstruct finite-dimensional disturbances, and introduce the space 
\begin{equation*}
    \dot{H}_\beta = \{ u \in \dot{L}^2 : u \in \text{span}\{\psi_{m,n} : 0 \leq m,n \leq \beta\} \}.
\end{equation*}
The space $\dot{H}_\beta$ is a space of bandlimited functions, i.e., functions that are finite linear combinations of the basis functions $\psi_{m,n}$, and where the maximal spatial frequency is given by the bandwidth $\beta$. For $f \in \dot{L}^2$, we denote by $P_\beta f$ the projection of $f$ on to $\dot{H}_\beta$, i.e., 
$$P_\beta f = \sum_{m,n \leq \beta} (f,\psi_{m,n})_{L^2} \psi_{m,n}(X).$$

Consider now a measurement along one side of the pool, $\Gamma_M = \{(x_1,0) : 0 \leq x_1 \leq L\}$, and assume $\eta_0 \in \dot{H}^s, s \geq 1$ and $ \varphi_0 = 0$. 
Given $\beta > 0$, we sample $\eta$ as follows: Let $N_x = \beta +1$ and set $x_k = (k+1/2)\frac{L}{N_x}$. For some $T$ and $N_t$ to be determined, set $ \Delta t = T/N_t$ and $t_j = j\Delta t $. We introduce two different measurements: 
\begin{equation}
\begin{split}
     \mathcal{M} = \left[\eta(x_k,0,t_j)\right] + \varepsilon_{k,j}, \\
    \mathcal{M}_\beta = \left[P_\beta \eta(x_k,0,t_j)\right],
\end{split} 
    \qquad  0 \leq k \leq N_x-1, \quad 0\leq j \leq N_t. 
    \label{M-sample}
\end{equation}
Here $\mathcal{M}$ is the noisy measurement of the true wave, while $\mathcal{M}_\beta$ is the exact measurement of $\eta$ with initial condition $P_\beta \eta_0 \in \dot{H}_\beta$, i.e., with a finite-dimensional initial condition. The noise is modelled by the terms $\varepsilon_{j,k}$ which are specified in the numerical experiment section. We use the Frobenius norm $\|\mathcal{M}\|_F^2 =\sum_{k,j}|m_{k,j}|^2$ as a norm on the measurement, and we denote by $R_\beta \eta_0$ the reconstruction of $\eta_0$ from the measurement $\mathcal{M}$.

Given a bandwidth $\beta$, the following theorem shows how to choose $T$ and $N_t$ such that we can stably approximate $P_\beta \eta_0$ from the measurements $\mathcal{M}$. The proof, found in \Cref{section: proofs}, is constructive and leads to a simple reconstruction method. 

\begin{theorem}
Choose $\beta \in \mathbb{N}$, and let 
 
 $$\delta = \frac{4\pi}{5 \omega_{\beta,\beta}}\min\left(1/5, \frac{\pi}{L}\left(\sqrt{\beta^2 + 1} -\beta\right)\sqrt{\frac{Hg}{2\sqrt{1+\frac{H^2g}{3S}}} - \frac{3S}{2H}}\right).$$
 Take $N_t \in \mathbb{N}$ such
 $$ N_t > \max\left(2\beta +2,\frac{1}{\delta}\right)  \quad \text{and} \quad T = \frac{4\pi N_t}{5\omega_{\beta,\beta}}.$$

Let $\mathcal{M}$ and $ \mathcal{M}_\beta$ be as described in \eqref{M-sample}, and assume $\eta_0 \in \dot{H}^s, s \geq 1$. Then the following estimates holds: 

\begin{equation*}
    \| P_\beta \eta_0 - R_\beta \eta_0\|_{L^2} \leq  \sqrt{\frac{LW}{2N_x\left(N_t-\frac{1}{\delta}\right)}}\|\mathcal{M}_{\beta} - \mathcal{M}\|_F, 
\end{equation*}
\begin{equation*}
    \| \eta_0 - R_\beta \eta_0\|_{L^2} \leq  \frac{2}{\beta^s}\|\eta_0\|_{\dot{H}^s} + \sqrt{\frac{LW}{2N_x\left(N_t-\frac{1}{\delta}\right)}}\|\mathcal{M}_{\beta} - \mathcal{M}\|_F. 
\end{equation*}

 \label{stability-finite}
\end{theorem}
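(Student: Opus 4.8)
The plan is to write the sampled forward map as a composition of a spatial stage and a temporal stage, invert each stage explicitly, bound from below the operator norm of the resulting linear reconstruction $R_\beta$ via a discrete Ingham-type inequality, and then add a spectral-cutoff approximation error for the second estimate.

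First I would set up the reduction. Since $\varphi_0=0$, Theorem \ref{existence} gives $\eta(x_1,0,t)=\sum_{m,n\geq0}C_{m,n}\cos(\pi m x_1/L)\cos(\omega_{m,n}t)q_{m,n}$ with $q_{m,n}=(\eta_0,\psi_{m,n})_{L^2}$, and truncating to $m,n\leq\beta$ describes the clean data underlying $\mathcal{M}_\beta$. I define $R_\beta$ constructively: reshape the measurement to an $N_x\times(N_t+1)$ array, apply the inverse discrete cosine transform along the spatial index to obtain for each $m=0,\dots,\beta$ a time series $h_m(t_j)$, and for each $m$ solve in least squares the linear system with matrix $[\cos(\omega_{m,n}t_j)]_{j,n}$ and right-hand side $(h_m(t_j))_j$ for the unknowns $C_{m,n}q_{m,n}$, $n=0,\dots,\beta$; finally assemble $R_\beta\eta_0\in\dot{H}_\beta$. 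Since $R_\beta$ is linear and, by construction, reconstructs $P_\beta\eta_0$ exactly from the noise-free bandlimited samples $\mathcal{M}_\beta$ (once the temporal systems have full column rank), we get $R_\beta\eta_0-P_\beta\eta_0=R_\beta(\mathcal{M}-\mathcal{M}_\beta)$, hence $\|P_\beta\eta_0-R_\beta\eta_0\|_{L^2}\leq\|R_\beta\|_{\mathrm{op}}\|\mathcal{M}-\mathcal{M}_\beta\|_F$. Everything thus reduces to the lower bound $\sigma_{\min}(\mathbf{A})^2\geq 2N_x(N_t-1/\delta)/(LW)$ on the least singular value of the sampled forward operator $\mathbf{A}:\dot{H}_\beta\to\mathbb{R}^{N_x\times(N_t+1)}$, since then $\|R_\beta\|_{\mathrm{op}}=1/\sigma_{\min}(\mathbf{A})$. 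For the spatial stage, with $N_x=\beta+1$ the nodes $x_k=(k+\frac12)L/N_x$ are exactly the points at which the cosines $\cos(\pi m x_1/L)$, $0\leq m\leq\beta$, are discretely orthogonal, with squared norm $N_x/2$ (and $N_x$ for $m=0$); so the spatial stage is an isometry up to the explicit factor coming from the normalizations $C_{m,n}\sim(LW)^{-1/2}$, it contributes the factor $2N_x/(LW)$ to $\sigma_{\min}(\mathbf{A})^2$, and it decouples the inversion into $\beta+1$ independent temporal problems of recovering $\{C_{m,n}q_{m,n}\}_{n=0}^\beta$ from $h_m(t_j)=\sum_{n=0}^\beta\cos(\omega_{m,n}t_j)C_{m,n}q_{m,n}$.

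The temporal stage is the crux, and it is a discrete Ingham inequality: for each fixed $m\leq\beta$ I would show $\sum_{j=0}^{N_t}|h_m(t_j)|^2\geq c(N_t-1/\delta)\sum_{n=0}^\beta(C_{m,n}q_{m,n})^2$ with $c$ independent of $m$ and $\beta$. Three ingredients enter. (i) A uniform spectral gap $|\omega_{m,p}-\omega_{m,q}|\geq|p-q|\gamma_\beta$ with $\gamma_\beta=\frac{\pi}{L}(\sqrt{\beta^2+1}-\beta)\tilde{\gamma}$, $\tilde{\gamma}=\sqrt{\frac{Hg}{2\sqrt{1+H^2g/(3S)}}-\frac{3S}{2H}}$: this holds because $n\mapsto k_{m,n}$ is convex, so its smallest consecutive gap is $k_{m,1}-k_{m,0}\geq\frac{\pi}{L}(\sqrt{\beta^2+1}-\beta)$ (using $W\leq L$), which combined with $G'\geq\tilde{\gamma}$ from the proof of Lemma \ref{gap2} and the monotonicity of $\omega_{m,\cdot}$ gives the gap for all pairs. (ii) The sampling step $\Delta t=T/N_t=\frac{4\pi}{5\omega_{\beta,\beta}}$, chosen so that every $\omega_{m,n}\Delta t\leq\frac{4\pi}{5}<\pi$, hence the frequencies $\pm\omega_{m,n}$ do not collide on the unit circle and $\frac{2\pi}{5}$ of angular room remains. (iii) The largeness conditions $N_t>2\beta+2$ (so the two-sided exponential system $\sum_{|n|\leq\beta}c_{m,|n|}e^{\,i\,\mathrm{sgn}(n)\omega_{m,|n|}t_j}$ has full column rank) and $N_t>1/\delta$, with $\delta=\Delta t\min(1/5,\gamma_\beta)$ measuring the effective gap on the circle. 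Estimating the Gram matrix of the sampled exponentials $(e^{i\omega_{m,n}j\Delta t})$ by its diagonal $N_t+1$ minus a geometric-series off-diagonal tail controlled by $1/\delta$ then yields the frame bound; the clean dependence on $N_t-1/\delta$ is precisely the discrete analogue of the threshold $T>2\pi/\gamma$ in Ingham's theorem. Multiplying the spatial factor $2N_x/(LW)$ by the temporal factor $N_t-1/\delta$ gives $\sigma_{\min}(\mathbf{A})^2\geq 2N_x(N_t-1/\delta)/(LW)$ and hence the first estimate.

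For the second estimate, since every mode excluded from $\dot{H}_\beta$ satisfies $m^2+n^2>\beta^2$, the spectral cutoff gives $\|\eta_0-P_\beta\eta_0\|_{L^2}\leq\beta^{-s}\|\eta_0\|_{\dot{H}^s}$; combining this with the orthogonal decomposition $\|\eta_0-R_\beta\eta_0\|_{L^2}^2=\|\eta_0-P_\beta\eta_0\|_{L^2}^2+\|P_\beta\eta_0-R_\beta\eta_0\|_{L^2}^2$ (valid since $R_\beta\eta_0\in\dot{H}_\beta$) and the first estimate yields the claim. The main obstacle is the temporal discrete-Ingham step: the gap $\gamma_\beta$ degenerates like $\beta^{-1}$, which is what forces $\Delta t\sim\omega_{\beta,\beta}^{-1}$ and a correspondingly long window $T$, and one must keep the off-diagonal Gram contributions summable and extract the sharp threshold $1/\delta$ uniformly in $m$ rather than a $\log\beta$-weakened version. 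The spatial orthogonality and the bookkeeping of the constants $C_{m,n}$ and the nodes $x_k$ into the final constant $\sqrt{LW/(2N_x(N_t-1/\delta))}$ are routine by comparison.
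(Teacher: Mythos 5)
Your proposal follows essentially the same route as the paper: decompose the sampled forward map into a spatial DCT--III stage (made exact by the midpoint nodes $x_k=(k+\tfrac12)L/N_x$ with $N_x=\beta+1$, contributing the factor $2N_x/(LW)$) and, for each fixed $m$, a non-harmonic Vandermonde system in the frequencies $\pm\omega_{m,n}$, then bound the least singular value of the latter by a discrete Ingham inequality with wrap-around gap $\delta$, and finish the second estimate with the spectral cutoff bound $\|\eta_0-P_\beta\eta_0\|_{L^2}\lesssim\beta^{-s}\|\eta_0\|_{\dot H^s}$ and linearity of $R_\beta$. The frequency-gap bookkeeping (convexity of $n\mapsto k_{m,n}$, the lower bound $G'\geq\tilde\gamma$, the choice $\Delta t=4\pi/(5\omega_{\beta,\beta})$ placing all $\xi_n$ in $[-2/5,2/5]$, and the full-rank condition $N_t>2\beta+2$) matches the paper's Lemma \ref{gap2} and its use in the proof; your orthogonal (Pythagorean) splitting of $\|\eta_0-R_\beta\eta_0\|^2$ is in fact marginally sharper than the paper's triangle inequality.

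The one genuine gap is the temporal step you yourself flag as "the main obstacle": you propose to derive the lower frame bound $(N_t-1/\delta)\|x\|^2\leq\|V_mx\|^2$ by estimating the Gram matrix of the sampled exponentials by its diagonal minus a geometric-series off-diagonal tail. That argument does not deliver the stated constant: a Gershgorin-type bound on the off-diagonal row sums $\sum_{l\neq k}\bigl|\sum_j e^{i(\lambda_k-\lambda_l)j\Delta t}\bigr|\lesssim\sum_{d\geq1}\bigl(\sin(\pi d\delta)\bigr)^{-1}$ produces an extra logarithmic factor in $1/\delta$, i.e.\ exactly the "log-weakened version" you say must be avoided, and your sketch contains no mechanism for removing it. The paper does not prove this inequality at all; it invokes the sharp large-sieve-type result of Aubel and B\"olcskei (as stated in Theorem 10.23 of Plonka et al.), whose proof rests on Selberg--Vaaler extremal functions rather than diagonal dominance. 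So either cite that result, as the paper does, or supply an extremal-function argument; as written, the central quantitative step of your proof would fail to give the claimed bound $\sqrt{LW/(2N_x(N_t-1/\delta))}$.
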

\begin{remark} For a given bandwidth $\beta$, the above result quantifies the reconstruction error in terms of how much the true measurement $\mathcal{M}$ deviates from the the ideal measurement $\mathcal{M}_\beta$ and in terms of how well $\eta_0$ is approximated by $P_\beta \eta_0$. Moreover, one can infer that as the bandwidth $\beta$ increases, longer observation time and higher sampling frequency is needed for the stability estimate to hold. This makes sense, since as $\beta$ increases, we need to ``filter out" frequencies that are closer together. However, as we mention in \Cref{section: experiments}, the estimates one gets from the above theorem on $N_t$ and $T$ are pessimistic, and it turns out that one also can get good reconstructions from using fewer samples. 
\end{remark}

\section{Inversion - method and numerical experiments}

We now propose a reconstruction method for the measurement setup in \cref{stability-finite} and test it on synthetic data.

\subsection{Numerical solution of the forward problem}
\label{num_sol}
To generate synthetic data, we compute a high precision approximation to $\mathcal{M}$ from the solution formula \eqref{solution} and add noise to the measurement. 
\begin{itemize}
    \item[1:] Choose the number $N_a$ of eigenfunctions included in the solution \eqref{solution}, so that $N_a \gg \beta$, 
    where $\beta$ is the bandwidth of the reconstruction. 
    \item[2:] For initial data $\eta_0$, compute the coefficents $q_{m,n}  = (\eta_0,\psi_{m,n})_{L^2}$ using a suitable numerical quadrature method. 
    \item[3:] Given the sampling points $(x_k,t_j)$, let
    $$ \mathcal{M}(k,j) = \sum_{m,n \leq N_a}\cos(\omega_{m,n}t_j)\psi_{m,n}(x_k,0)q_{m,n}, \quad 0 \leq k \leq N_x-1, \quad  0 \leq j \leq N_t.$$
    \item[4:] Add noise. Let $N_\varepsilon$ be the relative noise level, i.e., $$ \frac{\|\varepsilon\|_F}{\|\mathcal{M}\|_F }\leq N_\varepsilon.$$ We assume that the noise is $\mathbf{\varepsilon} \sim \mathcal{N}(0,I_{N_xN_t})$, i.e., independent and identically distributed Gaussian random variables. Given a realization of $\varepsilon'$ of the noise, we make sure the relative error is achieved by setting $\varepsilon = \varepsilon' N_\varepsilon\|\mathcal{M}\|_F/\|\varepsilon'\|_F$ and take the noisy measurement to be $\mathcal{M}^\varepsilon = \mathcal{M} + \varepsilon$.  
\end{itemize}

\subsection{Inversion method and numerical experiments}
\label{section: experiments}
Based on the the proof of \cref{stability-finite}, we propose the following inversion method. It uses the discrete cosine matrix 
\begin{equation*}
    D_3 = [ \cos(\pi(k+1/2)m/N_x) ], \quad 0 \leq k \leq N_x-1, \quad  0\leq m \leq N_x-1.
\end{equation*}
and the non-harmonic Vandermonde matrix 
\begin{equation*}
    V_m(j,:) = 
    \begin{bmatrix}
        (\overline{v_\beta^m})^j \quad   \cdots \quad  (\overline{v_0^m})^j \quad (v_0^m)^j  \quad \cdots \quad (v_\beta^m)^j
    \end{bmatrix},
     \quad \text{for } 0 \leq j \leq N_t, 
\end{equation*}
where $V_m(j,:)$ denotes the $j'$th row of the matrix $V_m$, $v_n^m =e^{i\omega_{m,n}\Delta t}$, $\Delta T = T/N_t$ and $\beta$ is the bandwidth.  We also denote by $\mathcal{C}$ the matrix of the Fourier coefficients of $P_\beta \eta_0$ (should you want to implement the method yourself, we suggest you skim through the proof of \cref{stability-finite} for more details). 
\subsubsection*{Reconstruction method (Non-harmonic Fourier inversion)}
\begin{itemize}
    \item[1:] Choose a bandwidth $\beta $. Given the problem parameters $L,W,H,g$ and $S$, compute the minimal frequency gap $\delta$ and choose the sampling parameters $N_x,N_t$ and $T$ according to \cref{stability-finite}. 
    \item[2:] Obtain the measurement $\mathcal{M}^\varepsilon$, i.e., the matrix of wave amplitude samples.
    \item[3:] Compute $U = D_3^{-1}\mathcal{M}$. For $m = 0,1,2,...,\beta$, solve the linear system 
    $$ \mathcal{C}^\top(m,:) = (V_m^\dagger V_m)^{-1}V_M^\dagger U(m,:)^\top,$$
    where $V_m^\dagger$ denotes the Hermitian transpose of $V_m$. 
    \item[4:] Extract coefficients $\{p_{m,n}\}$ from $\mathcal{C}$ and set $R_\beta \eta_0 = \sum_{m,n\leq \beta}p_{m,n}\psi_{m,n}(X).$
\end{itemize}

\subsubsection*{Numerical experiments}
\label{sec:numerical experiments}
\Cref{tab:numerical parameters} summarises the physical parameters used in the experiments. For the initial condition we choose
\begin{align*}
    \eta_0(X) &= -\frac{0.1}{\sigma^2}(x_1 - x_1^g)(x_2 - x_2^g)\exp\left(-\frac{(x_1 -x_1^g)^2 +(x_2 - x_1^g)^2}{\sigma^2}\right) \\
        & + 0.05\chi_{x_1^s,x_2^s,I}\sin( \pi(x_1 - (x_1^s -I/2))/I)\sin( 2\pi(x_2 - (x_1^s/2-I/2)/I)) - C_0,
\end{align*}
where $\sigma = 0.05, (x_1^g,x_2^g) = (L/3,W/3), (x_1^s,x_2^s) = (3L/4,3W/4), I = L/10$, $\chi_{x_1^s,x_2^s,I}$ is the indicator function for the square with side length $I$ centered at $(x_1^s,x_2^s)$, and $C_0$ is a constant such that $\int_{\Omega} \eta_0 \mathrm{d}X = 0$.

\begin{table}[H]
    \centering
    \begin{tabular}{ccccc}
        g & S & L & W & H\\ \hline
        $9.81 \si{m/s^2}$  & $7.08 \times 10^{-5} \si{m^3/s^2}$ & $1 \si{m}$ & $1 \si{m}$ & $0.5 \si{m} $\\
    \end{tabular}
    \caption{}
    \label{tab:numerical parameters}
\end{table}
We take the relative noise level to be $ N_\varepsilon = 0.1$ (i.e., $10 \%)$. Furthermore, we take three different values of the bandwidth parameter $\beta$ and conduct the inversion for each one. The sampling parameters for the different values of $\beta$ are shown in \Cref{tab:sampling parameters}. 
\begin{table}[H]
    \centering
    \begin{tabular}{c|cccc}
        $\beta$ & $N_t$ & $T$ & $\Delta t$ & $N_x$ \\ \hline
        8  &   770  &  103 \si{s} & 0.134 \si{s} & 9 \\
        16 \  & 2979 & 278 \si{s}  & 0.093 \si{s} & 17 \\
        32  & 11090  & 697 \si{s} & 0.062 \si{s} & 33  
        \end{tabular}
     \caption{}
    \label{tab:sampling parameters}
\end{table}
Here we see that the number of samples in time $N_t$ and the sampling time $T$ increases substantially as a function of $\beta$. 
As the conditions in \cref{stability-finite} are only sufficient, we in addition carry out the inversion for each given $\beta$ with $N_t/2$ samples (but $\Delta t$ as in \Cref{tab:sampling parameters}).   

\Cref{fig:wave sim} shows the initial condition and some snapshots the resulting wave produced by the method outlined in  \Cref{num_sol}, while  \cref{fig:measurement} shows a section of a measurement $\mathcal{M}$.
\begin{figure}
    \centering
    \includegraphics[width = 1\textwidth]{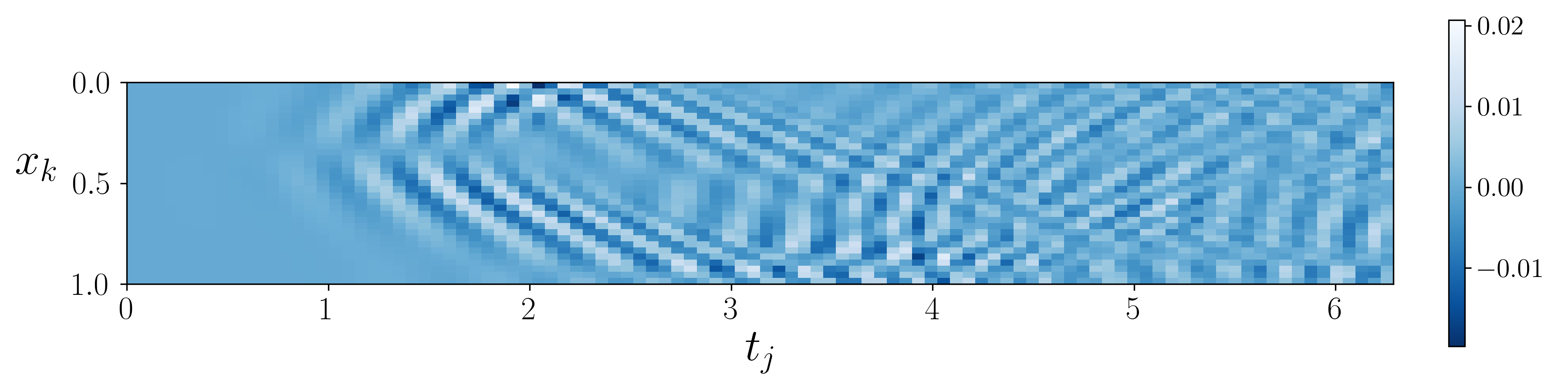}
    \caption{The figure is a plot of the sampled data in $\mathcal{M}$. The pixel $\mathcal{M}(k,j)$ is the measurement value at position $x_k$ and time $t_j$. }
    \label{fig:measurement}
\end{figure}

\subsection{Results}

 \Cref{tab:relative errors,tab:errors thm 5 b} give the relevant measures of error in the reconstructions $R_\beta \eta_0$, and in \Cref{fig:reconstruction errors} we show the reconstructions $R_\beta \eta_0$ and the pointwise error. 
We can see that the quality of the reconstruction, both pointwise and in the $L^2$-norm, goes from quite poor to quite good as we increase $\beta$. Moreover, we see that the inequalities in \cref{stability-finite} holds true, but that they are not very sharp. In a real experiment, the lack of any dissipation mechanism in \eqref{pool} makes the model unrealistic, as there would probably be very few waves bouncing around in a small wave tank after the 10 minute measurement time required for \cref{stability-finite} to hold. However, the relative errors on inversion with $T/2$ (as compared to the values in \cref{tab:sampling parameters}) are almost identical to those with $T$, indicating the possibility for improving the bounds on the sampling parameteres.  
\begin{table}[H]
    \centering
    \begin{tabular}{c|ccc}
        $\beta$ & $\frac{\|\eta_0 - R_\beta \eta_0\|_{L^2}}{\|\eta_0\|_{L^2}} $  & $\frac{\|P_\beta\eta_0 - R_\beta \eta_0\|_{L^2}}{\|P_\beta \eta_0\|_{L^2}} $ & $\frac{\|\eta_0 - P_\beta \eta_0\|_{L^2}}{\|\eta_0\|_{L^2}}$ \\  \hline 
        8  &    0.9329  &  0.0989  & 0.9322   \\
        16 \  & 0.5769 & 0.0177  & 0.5767  \\
        32  & 0.111  & 0.008  & 0.110  
        \end{tabular}
    \caption{}
    \label{tab:relative errors}
\end{table}

\begin{table}[H]
    \centering
    \begin{tabular}{c|cc}
        $\beta$ & $\|P_\beta\eta_0 - R_\beta \eta_0\|_{L^2} $ & $ \sqrt{\frac{LW}{2(N_t - 1/\delta)N_x}}\|\mathcal{M}_{\beta} - \mathcal{M}\|_F$  \\  \hline 
        8  &   0.00014  &    0.0227    \\
        16 \  & 0.00005 &   0.0270  \\
        32  & 0.000033  &  0.0107
        \end{tabular}
    \caption{}
    \label{tab:errors thm 5 a}
\end{table}
\begin{table}[H]
    \centering
    \begin{tabular}{c|cc}
        $\beta$ & $\|\eta_0 - R_\beta \eta_0\|_{L^2} $  & $\frac{\sqrt{2}}{\beta}\|\eta_0\|_{\dot{H}^1} + \sqrt{\frac{LW}{2(N_t - 1/\delta)N_x}}\|\mathcal{M}_{\beta} - \mathcal{M}\|_F $ \\  \hline 
        8  &   0.0037  &  0.0709    \\
        16 \  & 0.0023 & 0.0433   \\
        32  & 0.0005  &  0.0218
        \end{tabular}
    \caption{}
    \label{tab:errors thm 5 b}
\end{table}

\begin{figure}[H]
    \centering
    \includegraphics[width = 0.95\textwidth]{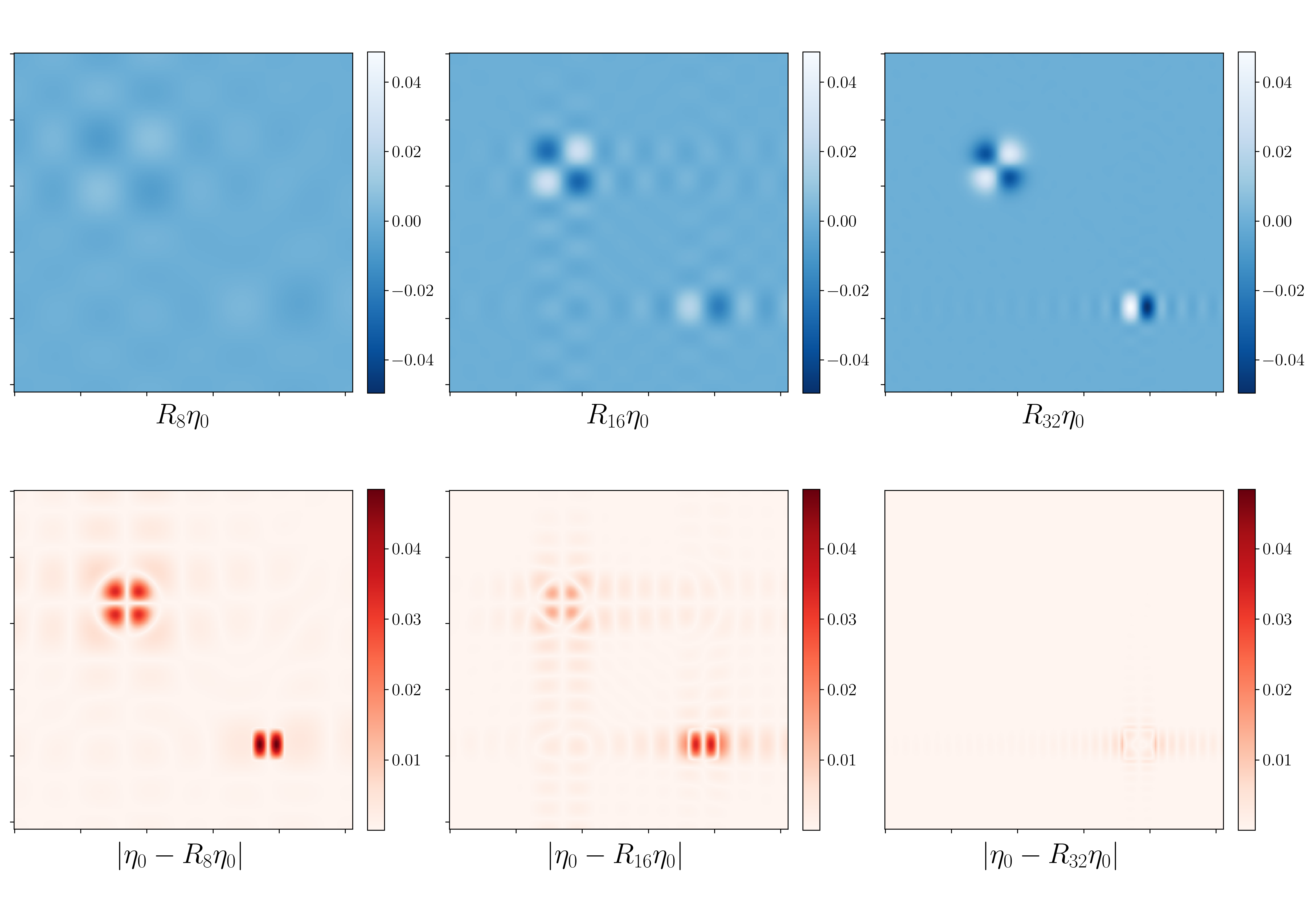}
    \caption{Plot of the reconstruction $R_\beta \eta_0$ and pointwise error $|R_\beta \eta_0 - \eta_0|$ for various $\beta$. The results are in line with the estimates in \cref{stability-finite}, and the quality of the reconstruction increases substantially with each doubling of $\beta$.}
    \label{fig:reconstruction errors}
\end{figure}

\section{Conclusion}
\label{section = conclusion}
In this paper we have analyzed our proposed model of Feynman's inverse problem. The main result is the unique determination of the initial disturbance from measurement on an open set, giving a positive answer to Feynman's question. We have also shown that due to the presence of surface tension, one can obtain stability results similar to those that hold for acoustic and electromagnetic waves, from boundary measurements of water waves. Moreover, we have proposed and numerically tested a reconstruction method applicable to discrete, noisy boundary measurements.  

As water waves models are mathematically very rich (cf. \cite{waterwavesprob,ablowitz2011nonlinear}), there are many more complicated, non-linear models that can be used to learn more about the propagation of information by water waves (or surface waves, in general). Also, one could consider compressible fluid models or a situation where the disturbance is time-dependent (e.g., moving around). Moreover, the analysis shows that water waves contain a lot of information about their previous state, and so it might be possible to use a similar approach to study conditions leading to rouge waves or other rare surface wave phenomena. \\

We conclude by going back to the BBC recording. Feynman continues by saying: 
\vspace{3mm}
\begin{center}
    \parbox{11cm}{\emph{And that's what we're doing when we're looking at something. The light that comes out is waves, just like in the swimming pool, except in three dimensions instead of the two dimensions of the pool [...] And we have an eight of an inch black hole, into which these things go [...] It's quite wonderful that we figure out so easy... that's really because the light waves are easier than the waves in the water, [they are] a little bit more complicated. It would have been harder for the bug than for us, but it's the same idea: to figure out what we are looking at at a distance.}}
\end{center}
\vspace{3mm}
We can reasonably say, at least for waves in a truly incompressible fluid in the linear regime, that Feynman was not entirely correct in his argument:  
It might be harder for the insect to figure out what is happening in the pool, but it is, in theory, possible. And had the insect, in contrast, been observing electromagnetic waves like the eye does (or had the water waves behaved like the electromagnetic waves), this would not be true. 

\section{Proofs}
\label{section: proofs}
This section contains the proofs that were not in the main sections and short introductions to the techniques used. 
\subsection{Proof of \cref{existence}}

\subsection*{The operators $\mathcal{G}$ and $\mathcal{L}$}
We first consider the DN operator as defined in \eqref{d2n}. It differs slightly from the standard definition, as it is usually defined on the whole boundary (cf. \cite{mueller2012linear}, Ch. 12). 
Due to the simple geometry of the pool, we apply separation of variables to find that the unique solution $\phi$ in \eqref{d2n} is 
\begin{equation}
    \phi(X,z) = \sum_{m+n > 0} \psi_{m,n}(X)\frac{\cosh(k_{m,n}(z+H))}{\sinh(k_{m,n}H)} \tanh(k_{m,n}H)(\varphi,\psi_{m,n})_{L^2(\Gamma_S)},
\end{equation}
where $k_{m,n} = \pi\sqrt{(m/L)^2 + (n/W)^2}$ and $\psi_{m,n}$ are the basis functions defined in  \Cref{section: preliminaries}. Hence 
\begin{equation} \mathcal{G}\varphi = \frac{\partial}{\partial z} \phi|_{z= 0} = \sum_{m+n > 0} k_{m,n}\tanh(k_{m,n}H)(\varphi,\psi_{m,n})_{L^2(\Gamma_S)}\psi_{m,n}(X),
\label{G}
\end{equation}
and we have expressed $\mathcal{G}$ in terms of the Fourier multiplier $|D|\tanh(|D|H)$. Since we can find constants $a_1,a_2 > 0$ such that $$a_1|m^2 +n^2|^{1/2} \leq k_{m,n}\tanh(k_{m,n}H) \leq a_2 |m^2 + n^2|^{1/2},$$ we infer that 
$\mathcal{G} : \dot{H}^s \to \dot{H}^{s-1}$. Moreover, $\mathcal{G}$ is symmetric on $\dot{H}^s$, and 
$(\mathcal{G}u,v)_{L^2}$ is an inner product on $\dot{H}^{1/2}$. Also, $\mathcal{G}$ is one-to-one, and since for any $v \in \dot{H}^{s-1}$, we may set 
$$u = \sum_{m+n > 0}  \frac{\hat{v}_{m,n}}{k_{m,n}\tanh(k_{m,n}H)}\psi_{m,n} $$
and compute 
$$\|u\|_{\dot{H}^s}^2 = \sum_{m+n > 0}  \frac{ (m^2 + n^2)^s|\hat{v}_{m,n}|^2}{(k_{m,n}\tanh(k_{m,n}H))^2} \leq C\sum_{m+n > 0} (m^2 + n^2)^{s-1}|\hat{v}_{m,n}|^2 = C \|v\|_{\dot{H}^{s-1}}^2,$$
it follows that $\mathcal{G} : \dot{H}^s \to \dot{H}^{s-1}$ is onto, and so it is bijective. 

Furthermore, ${\mathcal{G} : D(\mathcal{G}) \to \dot{H}^{1/2}, D(\mathcal{G}) =\dot{H}^{3/2} }$ is self-adjoint. This follows from the fact that for  
any linear operator $T$ on $\dot{H}^{1/2}$ that is one-to-one and has a dense range, it holds that 
$(T^*)^{-1} = (T^{-1})^*$ (cf. Ch. 10, \cite{kreyszig1991introductory}), where $T^*$ denotes the adjoint, i.e., $(Tu,v)_{\dot{H}^{1/2}} = (u,T^*v)_{\dot{H}^{1/2}}$. Using this, it is straight forward to verify that with $T = \mathcal{G}^{-1}$ we get that $(\mathcal{G}^{*})^{-1}= \mathcal{G}^{-1} $
and so $\mathcal{G}^{*} = \mathcal{G}$.
%%{$\mathcal{G} : \dot{H}^s \to \dot{H}^{s-1}$, $\mathcal{G}$ is invertible and $(\mathcal{G}u,v)_{L^2}$ is an inner product on $\dot{H}^{1/2}$. That $\mathcal{G}:\dot{H}^{1/2} \to \dot{H}^{-1/2} $ is self-adjoint now follows, since by \eqref{G} we have that $(\mathcal{G}u,v)_{L^2} = (u,\mathcal{G}v)_{L^2}$ for all $u,v \in \dot{H}^{1/2}$.}
%%Moreover, $\mathcal{G} : D(\mathcal{G}) \to \dot{H}^{1/2}, D(\mathcal{G}) =\dot{H}^{3/2} $ is self-adjoint. This follows from the fact that for a 
%%any linear operator $T$ on $\dot{H}^{1/2}$ that is injective and has a dense range, it holds that 
%$(T^*)^{-1} = (T^{-1})^*$ (cf Ch. 10, \cite{kreyszig1991introductory}). Using this, it is straight forward to verify that with $T = \mathcal{G}^{-1}$ we get that $(\mathcal{G}^{*})^{-1}= \mathcal{G}^{-1} $
%and so $\mathcal{G}^{*} = \mathcal{G}$.

The boundary value problem associated with $\mathcal{L}$ (and $\eta$) is
\begin{equation*}
    \begin{cases}
        \mathcal{L}u = f,  \quad \text{in} \quad \Gamma_S, \\
        \partial_\nu u = 0  \quad \text{on} \quad \partial \Gamma_S.
    \end{cases}
    \label{neumann}
\end{equation*}
It has a unique solution $u \in \dot{H}^1$ for every $f \in \dot{L}^2$ (cf. Ch. 8 in \cite{salsa2016partial}), and $u$ satisfies 
\begin{equation*}
     S(\nabla u,\nabla v)_{L^2} + g(u,v)_{L^2} = -(f,v)_{L^2}, \forall v\in \dot{H}^1.
    \label{neumann2}
\end{equation*} 
We also have that $$\mathcal{L}\psi_{m,n} = -(g + Sk_{m,n}^2)\psi_{m,n}, \quad m,n \in \mathbb{N}_0,$$
and so
\begin{equation*}
     u = \mathcal{L}^{-1}f = \sum_{m+n>0} \frac{\hat{f}_{m,n}}{-(g + Sk_{m,n}^2)}\psi_{m,n}.
     \label{Linv}
\end{equation*} 
By the same arguments used for $\mathcal{G}$, we can therefore conclude that $\mathcal{L}: \dot{H}^3 \to \dot{H}^1 $ is a bijection and that $\mathcal{L}$ is self-adjoint on $\dot{H}^1$.  

We now have what we need to prove well-posedness for the forward problem. 

\begin{proof}
(Proof of \cref{existence})
In \Cref{section: forward problem analysis}, we wrote the water waves system on the form
$$ \dot{U} = AU, \quad U(0) = U_0, \quad \text{with} \quad  A= \begin{bmatrix} 
        0& \mathcal{G} \\ \mathcal{L} &  0\end{bmatrix} \quad \text{and} \quad U = \begin{bmatrix}
            \eta \\
            \varphi
        \end{bmatrix}.$$
We first show that $A : D(A) \to H_E$, where $D(A) = \{\dot{H}^3\} \times \dot{H}^{3/2}$ is the domain of $A$, is skew-adjoint. $A$ is skew-symmetric, since 
$$ \langle A U,V \rangle_E = - \langle U,AV \rangle_E , \quad \forall U,V \in D(A). $$
Since both $\mathcal{L}$ and $\mathcal{G}$ are bijections, we have $R(A) = H_E$, where $R(A)$ denotes the range of $A$. We can therefore conclude that $A$ is skew-adjoint (cf. Ch. 3.7 in \cite{observation}).
By Stone's theorem (Theorem 3.8.6, \cite{observation}), $A$ is the generator of a strongly continuous, unitary semigroup $\mathbb{T}_t$. Hence
the unique solution to the initial value problem $\dot{U} = AU, U(0) = U_0$ in \eqref{craig-sulem} is given by $U(t) = \mathbb{T}_tU_0$, and $U \in C([0,\infty);H_E)$ satisfies $\|U(t)\|_E = \|U_0\|_E$.

Moreover, $A$ is diagonalizable. Indeed, one can check that 
$$ A V_{m,n}^\pm = \pm i \omega_{m,n} V_{m,n}^\pm, $$
where 
$$ V_{m,n}^\pm = \begin{bmatrix} (2(g + Sk_{m,n}^2))^{-1/2} \vspace{3mm}\\ \pm i(2k_{m,n} \tanh(k_{m,n}H))^{-1/2}\end{bmatrix}\psi_{m,n} \quad \text{and} \quad  \omega_{m,n} = \sqrt{(gk_{m,n} + Sk_{m,n}^3)\tanh(k_{m,n}H)}.$$
The eigenvectors here are chosen such that $\{V^+_{m,n}\}$ and $\{V^-_{m,n} \}$ are biorthogonal sequences, i.e., $\langle V_{m,n}^+,V_{i,j}^-\rangle_E = \delta_{m,n,i,j}.$ Consequently, 

$$ U(t) = \sum_{m,n \geq 0} e^{i\omega_{m,n}t}\langle U_0,V^-_{m,n} \rangle_E V_{m,n}^+ + e^{-i\omega_{m,n}t}\langle U_0,V^+_{m,n} \rangle_E V^-_{m,n}.$$
Since 
$$e^{i\omega_{m,n}t}\langle U_0,V^-_{m,n} \rangle_E V_{m,n}^+  + e^{-i\omega_{m,n}t}\langle U_0,V^-_{m,n} \rangle_E V^+_{m,n} = 2\text{Re}(e^{i\omega_{m,n}t}\langle U_0,V^-_{m,n} \rangle_E V^+_{m,n}), $$ 
and 
$$ \langle U_0,V^-_{m,n} \rangle_E V_{m,n}^+ =\frac{1}{2}\begin{bmatrix}  (\eta_0,\psi_{m,n})_{L^2} - i \frac{(k_{m,n} \tanh(k_{m,n}H))^{1/2}}{(g + Sk_{m,n}^2)^{1/2} }  (\varphi_0,\psi_{m,n})_{L^2} \vspace{3mm}\\
 (\varphi_0,\psi_{m,n})_{L^2} + i \frac{(g + Sk_{m,n}^2)^{1/2} }{(k_{m,n} \tanh(k_{m,n}H))^{1/2}}(\eta_0,\psi_{m,n})_{L^2}
\end{bmatrix}\psi_{m,n},
$$
it follows that 

\begin{equation*}
    U(t) = \sum_{m,n \geq 0} \cos(\omega_{m,n}t)\psi_{m,n}\begin{bmatrix}(\eta_0,\psi_{m,n})_{L^2} \vspace{3mm}\\ (\varphi_0,\psi_{m,n})_{L^2} \end{bmatrix} + \sin(\omega_{m,n}t)\psi_{m,n}\begin{bmatrix} \frac{\omega_{m,n}}{g + Sk^2_{m,n}}(\varphi_0,\psi_{m,n})_{L^2} \vspace{3mm}\\ -\frac{g + Sk_{m,n}^2}{\omega_{m,n}}(\eta_0,\psi_{m,n})_{L^2}\end{bmatrix}.
\end{equation*}
Last, note that for $m+n > 0$, we can find positive constants such that 
\begin{equation*}
\begin{split} c_1|m^2 + n^2|^{-1/4} &\leq  \frac{\omega_{m,n}}{g + Sk^2_{m,n}} \leq c_2|m^2 + n^2|^{-1/4}, \\
\tilde{c}_1|m^2 + n^2|^{1/4}  &\leq \frac{g + Sk_{m,n}^2}{\omega_{m,n}} \leq \tilde{c}_2|m^2 + n^2|^{1/4}. 
\end{split}
\end{equation*}
Assuming $(\eta_0,\varphi_0) \in \dot{H}^{s +\frac{1}{2}} \times \dot{H}^{s} $, we see that
\begin{align*}
    \|\eta(X,t) \|_{\dot{H}^{s+\frac{1}{2}}} &\leq \|\eta_0 \|_{\dot{H}^{s+\frac{1}{2}}} + \left\|\sum_{m,n} \frac{\omega_{m,n}}{g + Sk^2_{m,n}}(\varphi_0,\psi_{m,n})_{L^2} \psi_{m,n} \right\|_{\dot{H}^{s+\frac{1}{2}}} \\
    &\leq C\left(\|\eta_0 \|_{\dot{H}^{s+\frac{1}{2}}} + \|\varphi_0 \|_{\dot{H}^{s}} \right), \\
    \|\varphi(X,t) \|_{\dot{H}^{s}} &\leq \|\varphi_0 \|_{\dot{H}^{s}} + \left\|\sum_{m,n} \frac{g + Sk^2_{m,n}}{\omega_{m,n}}(\eta_0,\psi_{m,n})_{L^2}\psi_{m,n} \right\|_{\dot{H}^{s}} \\
    &\leq \tilde{C}\left(\|\varphi_0 \|_{\dot{H}^{s}} + \|\eta_0 \|_{\dot{H}^{s+\frac{1}{2}}} \right). \\
\end{align*}

\end{proof}

\subsection{Proofs of \Cref{stability} and \cref{increased stability}}

\subsubsection*{Measurements and non-harmonic Fourier series}
The measurement operator takes the form 
\begin{equation}
    \mathcal{M}(X,t) = \sum_{m,n \geq 0}^\infty  \psi_{m,n}(X)\left(\cos(\omega_{m,n}t) q_{m,n} + \sin(\omega_{m,n}t)\frac{\omega_{m,n} }{g + Sk_{m,n}^2}p_{m,n}\right),
    \label{measurement1}
\end{equation}
with  $  q_{m,n} = (\eta_0,\psi_{m,n})_{L^2(\Gamma_S)}$ and $ p_{m,n} = (\varphi_0,\psi_{m,n})_{L^2(\Gamma_S)}$.
Setting \begin{equation*}
    c_{m,n} =  \frac{1}{2}\left(q_{m,n} - i \frac{\omega_{m,n}}{g + Sk_{m,n}^2}p_{m,n}\right), \quad \text{for} \quad  m,n \geq 0, 
\end{equation*}
 we may write  
\begin{equation*}
    \mathcal{M}(X,t) = \sum_{m \geq 0 }\left(\sum_{n\geq 0}^\infty  \psi_{m,n}(X) c_{m,n} e^{i\omega_{m,n}t} + c.c.\right), 
\end{equation*}
where $c.c.$ denotes the complex conjugate of $\sum_{n\geq 0}^\infty \psi_{m,n}(X) c_{m,n} e^{i\omega_{m,n}t}$. 
Fixing $m $ and temporarily ignoring the spatial dependence in the above expression, we have a function of the form  
\begin{equation}
    f_m(t) = \sum_{n \geq 0} c_{m,n} \mathrm{e}^{i\omega_{m,n} t} + c.c.. 
      \label{Mexp}
\end{equation}
This is an example of a non-harmonic Fourier series (or almost periodic functions) \cite{young2001introduction,observation}.  They are functions on the form 
$$
f(t) = \sum_{n \in \mathbb{Z}} a_n \mathrm{e}^{i\lambda_n t}, \quad (a_n) \in \ell^2, \lambda_n \in \Lambda, 
$$
where the frequency set $\Lambda = \{\lambda_n\}_{n \in \mathcal{I}}$ differs from the classical $\{2\pi n/T\}_{n \in \mathbb{Z}}$. When the frequency set is non-harmonic, as is the case with $\{\omega_{m,n}\}$, we cannot immediately apply Fourier inversion. But under certain conditions, we can get a result similar to Parseval's identity: Ingham's theorem (cf. \cite{observation}) states that if 
${ \inf_{m \neq n} |\lambda_n - \lambda_m| = \gamma > 0 }$ and $ T > 2 \pi /\gamma$, then there are positive constants $c_-,c_+$ such that
$$c_-\sum_{n} |a_n|^2 \leq \int_0^T \big|\sum_n a_n e^{i\lambda_n t} \big|^2 \mathrm{d}t \leq c_+ \sum_{n} |a_n|^2.$$
For fixed $m$ (or $n$), the set $\{\omega_{m,n}\}_{n}$ satisfies the requirements of Ingham's theorem, and we use this is \Cref{sec:stability-finite}. However, it is easy to check that $|\omega_{m,0} - \omega_{m,1}| \to 0$ as $m \to \infty.$, i.e., if both indices are allowed to vary, the gap condition does not hold. To circumvent this, we will use a weaker version of Ingham's theorem from \cite{loreti1997partial}, that was used to prove observability of the wave equation in \cite{mehrenberger2009ingham,komornik2014cross}. 

\begin{theorem}
(Weakened Ingham's Theorem((Proposition 3.1 in \cite{komornik2014cross})
Let $\{\lambda_k\}_{k\in \mathbb{Z}}$ be a sequence of real numbers and $N$ a positive integer, and assume there exists some $\gamma > 0$ such that $$|\lambda_k - \lambda_l| \geq |k -l|\gamma \quad \text{if} \quad \max(|k|,|l|) \geq N.$$ 
Then for any $(a_n) \in \ell^2$ and $T > 0$,  
\begin{equation}
    \int_0^T \big|\sum_{k\in \mathbb{Z}}a_k e^{i\lambda_k t} \big|^2 \mathrm{d}t \geq \frac{2T}{\pi}\left(\sum_{|k| \geq N}|a_k|^2 - \left(\frac{2\pi}{T\gamma}\right)^2\sum_{k \in \mathbb{Z}}|a_k|^2 \right). 
    \label{weakham}
\end{equation}
\label{weakingham}
\end{theorem}
We now adapt the observability proof from \cite{mehrenberger2009ingham,komornik2014cross} to water waves. To show that \cref{weakingham} is satisfied for $f_m(t)$
in \eqref{Mexp}, we note that with   $$ \lambda_k^m = \begin{cases} \text{sgn}(k)\omega_{m,|k|-1} ,\quad |k| > 0,  \\ 0, \quad k = 0, \end{cases} \quad \text{and}\quad  a_k^m = \begin{cases}
    c_{m,k-1},  \quad k > 0 \\
    \bar{c}_{m,|k|-1}, \quad k < 0, \\
    0, \quad k = 0, 
\end{cases}  
\quad \text{for } k \in \mathbb{Z}, $$
we have 
$$ \sum_{k \in \mathbb{Z}} a_k^m e^{i\lambda_k^m t} = f_m(t). $$ Hence, if the gap condition in \cref{weakingham} holds for the frequency set $ \{\lambda_{k}^m\}_{k\in \mathbb{Z} }$, then \eqref{weakham} holds for $f_m(t)$.

\begin{lemma}
For any $m,n \in \mathbb{N}_0$, let  $$\{\lambda_k^m = \text{sgn}(k)\omega_{m,|k|-1}\}_{k \in \mathbb{Z}} \quad  \text{and} \quad  \{\tilde{\lambda}_k^n = \text{sgn}(k)\omega_{|k|-1,n}\}_{k \in \mathbb{Z}} .$$ 
It then holds for  $k,l \in \mathbb{Z} $  that 
\begin{align*}
    |\lambda_{k}^m - \lambda_{l}^m| &\geq |k -l| \gamma, \quad \text{for } \max(|k|,|l|) \geq m, \\
    |\tilde{\lambda}_{k}^n - \tilde{\lambda}_{l}^n| &\geq |k -l|\gamma, \quad \text{for } \max(|k|,|l|) \geq n, 
\end{align*}
with  
\begin{equation*}
    \gamma = \frac{\pi W}{2L\sqrt{L^{2} + W^{2}} }\sqrt{\frac{Hg}{2\sqrt{1+\frac{H^2g}{3S}}} - \frac{3S}{2H}}. 
    \label{gapconst}
\end{equation*} 
%\frac{M}{W}\left(\sqrt{1 + (W/LM)^2} - 1\right)\ &\frac{\pi}{L}\sqrt{\frac{Hg}{2\sqrt{1+\frac{H^2g}{3S}}} - \frac{3S}{2H}}, \quad M = 0,

\label{gap2}
\end{lemma}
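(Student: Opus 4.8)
The plan is to isolate two ingredients: a uniform lower bound on the group velocity $c_g(k)=\omega'(k)$, and a lower bound on the spacing of the one-dimensional wavenumbers $k_{m,n}$ in each index direction. Write
\[
c_\ast:=\sqrt{\frac{Hg}{2\sqrt{1+H^{2}g/(3S)}}-\frac{3S}{2H}},
\]
so that $\gamma=\tfrac{\pi W}{2L\sqrt{L^{2}+W^{2}}}\,c_\ast$. The essential step is the inequality $c_g(k)\ge c_\ast$ for all $k>0$. Differentiating the dispersion relation gives $2\omega\omega'=(g+3Sk^{2})\tanh(kH)+(gk+Sk^{3})H\operatorname{sech}^{2}(kH)$, and dropping the nonnegative second term, together with $\omega^{2}=k(g+Sk^{2})\tanh(kH)$, yields
\[
c_g(k)^{2}\ \ge\ \frac{(g+3Sk^{2})^{2}\tanh(kH)}{4k\,(g+Sk^{2})}.
\]
I would minimize the right-hand side over $k\in(0,\infty)$: using $\tanh x\ge\tfrac{3x}{3+x^{2}}$ together with a complementary bound of $\tanh(kH)$ below by a positive constant when $kH$ is not small (so as not to throw away $\tanh(kH)\to1$ at large $kH$) reduces this to minimizing a rational function of $k^{2}$, which after simplification is exactly $c_\ast^{2}$. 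Note $c_\ast^{2}>0$ requires $gH^{2}$ to be large enough relative to $S$; for $S=0$ the bound is vacuous, which is why surface tension is needed, as the remark after Theorem~\ref{stability} anticipates. Since $\omega$ is strictly increasing with $\omega(0^{+})=0$, this gives $\omega(k_{2})-\omega(k_{1})\ge c_\ast(k_{2}-k_{1})$ for $k_{2}>k_{1}$, and $\omega(k)\ge c_\ast k$.

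With the group-velocity bound in hand, I would prove the two displayed inequalities at once. By the oddness $\lambda^{m}_{-k}=-\lambda^{m}_{k}$ and the symmetry of both sides in $(k,l)$ one may assume $|k|\ge|l|$, so the hypothesis becomes $|k|\ge m$. The statement then reduces to: (a) $\omega_{m,j-1}\ge j\gamma$ for every $j\ge1$, and (b) $\omega_{m,k-1}-\omega_{m,l-1}\ge(k-l)\gamma$ whenever $k>l\ge1$ and $k\ge m$. Indeed, if $l=0$ or $k,l$ have opposite signs, then $|\lambda^{m}_{k}-\lambda^{m}_{l}|$ equals $\omega_{m,|k|-1}$ or $\omega_{m,|k|-1}+\omega_{m,|l|-1}$ while $|k-l|$ equals $|k|$ or $|k|+|l|$, so (a) suffices; if $k,l$ are nonzero of the same sign then $|\lambda^{m}_{k}-\lambda^{m}_{l}|=|\omega_{m,|k|-1}-\omega_{m,|l|-1}|$ and (b) applies. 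For (a): $\omega_{m,j-1}\ge c_\ast k_{m,j-1}$, with $k_{m,j-1}\ge\tfrac{\pi(j-1)}{W}$ when $j\ge2$ and $k_{m,0}\ge\tfrac{\pi m}{L}$ when $j=1$; since $W\le L$ both are $\ge j\gamma$ (the only potential exception, the mode $(m,j-1)=(0,0)$, carries coefficient $0$ in \eqref{Mexp}). For (b): $\omega_{m,k-1}-\omega_{m,l-1}\ge c_\ast(k_{m,k-1}-k_{m,l-1})$, and the identity
\[
k_{m,k-1}-k_{m,l-1}=\frac{(\pi/W)^{2}(k-l)(k+l-2)}{k_{m,k-1}+k_{m,l-1}},
\]
combined with $k_{m,j}\le\pi\max(m,j)\tfrac{\sqrt{L^{2}+W^{2}}}{LW}$, the inequality $\max(m,k-1)\le k$ (valid since $k\ge m$), and $k+l-2\ge k-1\ge\tfrac12 k$, produces a lower bound of the form $(k-l)\,c_{L,W}$ with $c_{L,W}>0$ of the order of $\tfrac{L}{W\sqrt{L^{2}+W^{2}}}$. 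The second displayed inequality is the same argument with $m,n$ and $L,W$ interchanged; $W\le L$ makes the $m$-direction constant the smaller of the two, and a somewhat careful treatment of the small-index terms ($l\in\{0,1\}$) yields the $\gamma$ of \eqref{gapconst}.

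The main obstacle is the group-velocity bound: it is a one-variable minimization over the full range $k\in(0,\infty)$ whose outcome must survive three genuinely different regimes — the shallow-water limit $c_g\to\sqrt{gH}$ as $k\to0$, the intermediate minimizing wavenumber, and $c_g\to\infty$ as $k\to\infty$ — so no single crude estimate on $\tanh(kH)$ suffices, and a case split on the size of $kH$ is what delivers the closed form $c_\ast$. The rest is bookkeeping: the small-index wavenumber gaps, and routine estimates using monotonicity of $\omega$ and the explicit formula for $k_{m,n}$.
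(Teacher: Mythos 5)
Your overall strategy coincides with the paper's: a uniform positive lower bound on the group velocity $G'(t)$, $G(t)=\sqrt{(gt+St^{3})\tanh(tH)}$, obtained by dropping the $\operatorname{sech}^{2}$ term and replacing $\tanh$ by an algebraic lower bound; a spacing estimate for $|k_{m,p}-k_{m,q}|$ via the difference-of-squares identity together with $k_{m,p}\le \pi p\sqrt{L^{-2}+W^{-2}}$ for $p\ge m$ (which is exactly where the hypothesis $\max(|k|,|l|)\ge m$ enters); and a reduction of the $\mathbb{Z}$-indexed sequence by sign cases. Your treatment of the opposite-sign case, demanding $\omega_{m,j-1}\ge j\gamma$ rather than $(j-1)\gamma$, is in fact slightly more careful than the paper's.

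The one step that does not go through as written is the group-velocity minimization. The paper uses $\tanh x>x/\sqrt{1+x^{2}}\ \ge\ x/(1+x)$, which gives the single uniform bound $G'(t)^{2}>\frac{H(g+3St^{2})}{4(1+Ht)}$; here the numerator grows like $t^{2}$ against a denominator growing like $t$, so the infimum over $t>0$ is attained at $Ht=\sqrt{1+H^{2}g/(3S)}-1$ and (after discarding one more positive term) equals exactly $c_{\ast}^{2}$ — no case split is needed. Your bound $\tanh x\ge \frac{3x}{3+x^{2}}$ decays at infinity, and the resulting rational function $\frac{3H(g+3Sk^{2})}{4\left(3+(kH)^{2}\right)}$ is monotone decreasing to $\frac{9S}{4H}$, which for the parameters of the paper is two orders of magnitude below $c_{\ast}^{2}$; the complementary large-$kH$ regime can restore positivity, but nothing forces the combined estimate to land on the specific constant $c_{\ast}$, so the claim that the simplification gives ``exactly $c_{\ast}^{2}$'' is unsubstantiated. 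A second, smaller issue: bounding $(k+l-2)/\left(k_{m,k-1}+k_{m,l-1}\right)$ via $k+l-2\ge k/2$ and $k_{m,j}\le \pi k\,\sqrt{L^{-2}+W^{-2}}$ yields only half of the paper's $\frac{\pi L}{2W\sqrt{L^{2}+W^{2}}}$ (the paper compares $p+q$ directly with $k_{m,p}+k_{m,q}\le 2\pi p\sqrt{L^{-2}+W^{-2}}$ and uses $(p+q)/(2p)\ge 1/2$). Neither problem threatens the qualitative conclusion — you would still obtain a positive uniform gap — but the lemma asserts the specific $\gamma$ that feeds the observation-time bound of Theorem \ref{stability}, and your details as given produce only a constant multiple of it.
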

\begin{proof} 
If $\text{sgn}(k) = \text{sgn}(l)$, then $|\lambda^m_k - \lambda^m_l| = |\omega_{m,|k|-1} - \omega_{m,|l|-1}| $. On the other hand, for $k,l > 0$, 
$$ \lambda_k^m  - \lambda_{-l}^m = \omega_{m,k-1} + \omega_{m,l-1} \geq \omega_{m,k-1} - \omega_{m,l-1}, $$
and so if for any $p,q \geq 0$ it holds that 
\begin{equation*}
    |\omega_{m,p} - \omega_{m,q}| \geq |p - q| \gamma \quad \text{and} \quad \omega_{m,p} \geq p\gamma \quad \text{for } \max(p,q) \geq m, 
\end{equation*}
then the gap condition holds for $\{\lambda_k\}_{k\in \mathbb{Z}\setminus \{0\}}$. \\

We now consider the function $G(t) = \sqrt{(gt +S t^3)\tanh(tH)} $ for $t > 0$. We have 
\begin{equation*}
    G'(t) = \frac{(g+3S t^2)\tanh(tH) + H(gt +S t^3)\text{sech}(tH)^2}{2\sqrt{(gt +S t^3)\tanh(tH)}}. 
\end{equation*}
Using elementary inequalities together with the lower bound $  \tanh(t) > t/\sqrt{1 +t^2} , t >0, $ (see Lemma 4 in \cite{bagul2021tight}), we find that
$$ G'(t) >\sqrt{\frac{H(g+3St^2)}{4(1+Ht)}} \geq \sqrt{\frac{Hg}{2\sqrt{1+\frac{H^2g}{3S}}} - \frac{3S}{2H}}  = \tilde{\gamma} $$
Fixing $m$ and assuming $p \geq m$ and $ p > q \geq  0$, we have that 
$$ k_{m,p} \leq \pi p\sqrt{L^{-2} + W^{-2}} < \pi(p + q)\sqrt{L^{-2} + W^{-2}}  \quad \text{and} \quad   k_{m,p}^2 - k_{m,q}^2 = \frac{\pi^2}{W^2}( p^2 - q^2).$$
Hence 
$$ |k_{m,p} - k_{m,q}| = \frac{|k_{m,p}^2 - k_{m,q}^2|}{|k_{m,p} + k_{m,q}|} > \frac{\frac{\pi^2}{W^2}|p - q||p+q|}{2\pi|p + q|\sqrt{L^{-2} + W^{-2}} } \geq \frac{\pi L}{2W\sqrt{L^{2} + W^{2}} }|p - q|. $$
We obtain
\begin{equation} 
\begin{split}
    |\omega_{m,p} - \omega_{m,q}| &= |G(k_{m,p}) - G(k_{m,q})| \\
    &= \bigg|\int_{k_{m,p}}^{k_{m,q}} G'(t) \mathrm{d}t\bigg| \geq \frac{\pi L}{2W\sqrt{L^{2} + W^{2}} }|p - q| \tilde{\gamma}.
\end{split}
\label{omega-dist}
\end{equation}
We also have that $k_{m,p} \geq \frac{\pi}{W}p$ and so $\omega_{m,p} \geq \frac{\pi}{W}p \tilde{\gamma}$. Arguing the same way, we find the estimate 
$$|\omega_{p,n} - \omega_{q,n}| \geq \frac{\pi W}{2L\sqrt{L^{2} + W^{2}} }|p - q| \tilde{\gamma}, $$
and since we have assumed $L\geq W$, the result follows. 
\end{proof}

We now prove \cref{stability}.

\begin{proof}
On the boundary $\Gamma_M$, we have 
\begin{align*}
    \mathcal{M}(X,t) |_{x_2 = 0} &= \sum_{m \geq 0}  \frac{C_{m,|n|}}{\sqrt{WL}} \cos(\pi m x_1/L) \sum_{n\in\mathbb{Z}}  a_k^n e^{i\lambda_n^m t}, \\
    \mathcal{M}(X,t)|_{x_1 = 0} &= \sum_{n \geq 0}  \frac{C_{|m|,n}}{\sqrt{WL}} \cos(\pi n x_2/L) \sum_{m \in \mathbb{Z}}b_m^n e^{i\tilde{\lambda}_{m}^n t},
\end{align*}
where $$b_m^n = \begin{cases}
    c_{m-1,n},  \quad m > 0 \\
    \bar{c}_{|m|-1,n}, \quad k m 0, \\
    0, \quad m = 0, 
\end{cases}  
\quad \text{for } k \in \mathbb{Z}. $$ We write
$$ I_1 = \int_0^T \int_0^L |\mathcal{M}(X,t)|^2 \mathrm{d}x_1 \mathrm{d}t \quad \text{and} \quad  I_2  = \int_0^T \int_0^W |\mathcal{M}(X,t)|^2 \mathrm{d}x_2 \mathrm{d}t. $$
By orthogonality,\cref{weakingham}, and \cref{gap2}, we have   
\begin{align*} 
I_1 = \frac{2}{W}\sum_{m \geq 0} \int_0^T \bigg|\sum_{n\in\mathbb{Z}}   a_n^m e^{i\lambda_n^m t} \bigg|^2 \mathrm{d}t 
\geq \frac{4T}{W\pi}\sum_{m \geq 0}  \left( \sum_{\substack{n\in\mathbb{Z} \\ |n| \geq m}} |a_n^m|^2 - \left(\frac{2 \pi}{T\gamma}\right)^2 \sum_{\substack{n\in\mathbb{Z} }} |a_n^m|^2\right)
\end{align*}
and
\begin{align*} 
I_2 = \frac{2}{L}\sum_{n \geq 0} \int_0^T \bigg|\sum_{m\in\mathbb{Z}}   b_m^n e^{i\tilde{\lambda}_m^n t} \bigg|^2 \mathrm{d}t 
\geq \frac{4T}{L\pi}\sum_{n \geq 0}  \left( \sum_{\substack{m \in\mathbb{Z} \\ |m| \geq n }} |b_m^n|^2 - \left(\frac{2 \pi}{T\gamma}\right)^2 \sum_{m\in\mathbb{Z}} |b_m^n|^2\right).
\end{align*}
Consequently,  
\begin{align*}
    I_1/L + I_2/W &\geq \frac{4T}{WL\pi}\left( \sum_{m \geq 0} \sum_{\substack{n\in\mathbb{Z} \\ |n| \geq m}} |a_n^m|^2 +\sum_{n \geq 0} \sum_{\substack{m\in\mathbb{Z} \\ |m| \geq n}} |b_m^n|^2   - \left(\frac{2 \pi}{T\gamma}\right)^2\sum_{k \geq 0} \sum_{\substack{l\in\mathbb{Z} }} |a_l^k|^2 + |b_l^k|^2 \right).
\end{align*}
In terms of $c_{m,n}$, we have that 
\begin{align*}
    \sum_{m \geq 0} \sum_{\substack{n\in\mathbb{Z} \\ |n| \geq m}} |a_n^m|^2 +\sum_{n \geq 0} \sum_{\substack{m\in\mathbb{Z} \\ |m| \geq n}} |b_m^n|^2& = 2 \sum_{m \geq 0} \sum_{\substack{n\in\mathbb{N} \\ n \geq m}} |c_{m,n-1}|^2 +2\sum_{n \geq 0} \sum_{\substack{m\in\mathbb{N} \\ m \geq n}} |c_{m-1,n}|^2 \\ &\geq 2\sum_{m \geq 0 } \sum_{n \geq 0} |c_{m,n}|^2,  
\end{align*}
and 
\begin{align*}
    \sum_{k \geq 0} \sum_{\substack{l\in\mathbb{Z} }} |a_l^k|^2 + |b_l^k|^2 = 2\sum_{m \geq 0 } \sum_{n \geq 0} |c_{m,n}|^2.
\end{align*}
As a result,
\begin{align*}
    I_1/L + I_2/W  & \geq  \frac{8T}{WL\pi}\left( \left(1  - \left(\frac{2 \pi}{T\gamma}\right)^2\right)\sum_{m \geq 0} \sum_{n \geq 0} |c_{m,n}|^2 \right) \\ 
    &= \frac{8T}{WL\pi}\left( \left(1  - \left(\frac{2 \pi}{T\gamma}\right)^2\right)\sum_{m \geq 0} \sum_{ n \geq 0} q_{m,n}^2 + \left(\frac{\omega_{m,n} }{g + Sk_{m,n}^2} p_{m,n}\right)^2 \right) 
\end{align*}
Since $\frac{\omega_{m,n}}{g + Sk_{m,n}^2} \sim (m^2+n^2)^{-1/4}$, there is a constant $C_s$ such that $$\sum_{m \geq 0} \sum_{ n \geq 0} q_{m,n}^2 + \left(\frac{\omega_{m,n} }{g + Sk_{m,n}^2} p_{m,n}\right)^2  \geq C_s\left(\|\eta_0\|_{L^2}^2 + \|\varphi_0\|_{\dot{H}^{-1/2}}^2\right).$$
With $T$ as in theorem we get $1  - \left(\frac{2 \pi}{T\gamma}\right)^2 = 3/4$. Then, with $C = \frac{2}{L\pi}C_s $, we have 
\begin{equation*}
    \int_0^T \int_{\Gamma_M} |\mathcal{M}(X,t)|^2 \mathrm{d}X\mathrm{d}t = I_1 + I_2 \geq TC \left(\|\eta_0\|_{L^2}^2 + \|\varphi_0\|_{\dot{H}^{-1/2}}^2\right).
\end{equation*}
\end{proof}
\begin{proof}
(Proof of  \cref{increased stability}) For $k = 1,2,...$ we have that $ \omega_{m,n}^{2k} \sim (m^2+n^2)^{3k/2}$. 
Assuming that the initial data is sufficiently smooth, we differentiate $\mathcal{M}$ with respect to time $k$ times and carry out the argument above to find   
\begin{align*}
    I_1/L + I_2/W  = \frac{8T}{WL\pi}\left( \left(1  - \left(\frac{2 \pi}{T\gamma}\right)^2\right)\sum_{m \geq 0} \sum_{ n \geq 0} q_{m,n}^2\omega_{m,n}^{2k} + \left(\frac{\omega_{m,n}^{1 + k} }{g + Sk_{m,n}^2} p_{m,n}\right)^2 \right). 
\end{align*}
The conclusion now follows since there is a constant $C_k$ such that 
$$\sum_{m \geq 0} \sum_{ n \geq 0} q_{m,n}^2\omega_{m,n}^{2k} + \left(\frac{\omega_{m,n}^{1+k} }{g + Sk_{m,n}^2} p_{m,n}\right)^2  \geq C_k\left(\|\eta_0\|_{\dot{H}^{\frac{3}{2}k}}^2 + \|\varphi_0\|_{\dot{H}^{\frac{3}{2}k-1/2}}^2\right).$$

\end{proof}

\subsection{Proof of \cref{stability-finite}}
The idea of this proof is to construct a well-conditioned mapping from the discrete measurement to the Fourier coefficients of the initial disturbance. It is achieved by showing that a finite-dimensional version of Ingham's theorem holds under the right conditions.  
\begin{proof}
Set $\mathcal{A} = [a_n^m]$ and $\mathcal{C} = [C_{m,|n|}a_n^m]$ the matrix of the Fourier coefficients of $P_\beta\eta_0$, where $C_{m,|n|}$ are the normalisation constants for $\psi_{m,n}$, and $\|P_\beta \eta_0 \|_{L^2} = \|\mathcal{A}\|_F$.  
We have that 
$$   P_\beta\eta(X,t)|_{\Gamma_M} = \sum_{ 0 \leq m \leq \beta}  \cos(\pi m x_1/L) \sum_{0 < |n|\leq \beta+1}C_{m,|n|}   a_n^m e^{i\lambda_n^m t}, $$
and for a fixed time $t_j$, we get 
$$\mathcal{M}_\beta(k,j) = \sum_{ 0 \leq m \leq \beta}  \cos(\pi m (k+1/2)/N_x) \sum_{0 <|n|\leq \beta +1 } C_{m,|n|}  a_n^m e^{i\lambda_n^m t}, \quad 0 \leq k \leq N_x-1.$$
Writing $$u_j = \begin{bmatrix} \sum_{0< |n|\leq \beta+1}  a_n^0 e^{i\lambda_n^m t_j} \\ \sum_{0<|n|\leq \beta+1}  a_n^1 e^{i\lambda_n^1 t_j}
        \\ \vdots \\ \sum_{0<|n|\leq \beta +1}  a_n^\beta e^{i\lambda_n^\beta t_j},
        \end{bmatrix},$$
we see that the measurement at time $t_j$ is given by\footnote{We use the notation $V(:,k)$ and $V(n,:)$ to denote the $k'$th column and $n'$th row of the matrix $V$, respectively.} $\mathcal{M}_\beta(:,j)=  D_3 u_j$, where 
$D_3$ is the discrete cosine transform matrix of the third type (cf. \cite{strang1999discrete}), i.e., 
\begin{equation*}
    D_3 = [ \cos(\pi(k+1/2)m/N_x) ] \in \mathbb{R}^{N_x\times N_x}, \quad 0 \leq k \leq N_x-1, \quad  0\leq m \leq N_x-1.
\end{equation*}
The matrix $D_3$ is orthogonal and can be made unitary by scaling the first row by $1/\sqrt{N}$ and the others by $\sqrt{2/N}$. Writing $U = [u_0 \enspace  u_1 \enspace \hdots \enspace  u_{N_t} ]$, we have 
$$ \mathcal{M}_\beta = D_3 U \implies U = D_3^{-1} \mathcal{M}_\beta.$$
The point of this is that the row $U(m,:)$ contains the sampled values of $\sum_{0<|n|\leq \beta +1}  a_n^m e^{i\lambda_n^m t}$. 
Recalling that $\lambda_n^m = \text{sgn}(n)\omega_{m,|n|-1}$, we define the non-harmonic Vandermonde matrix $V_m$ as follows\footnote{The indexing is awkward, since $\omega_{0,0} = 0$ is included. But since $p_{0,0} =(\eta_0,\varphi_{0,0})_{L^2} = 0$, we can just remove the columns with $\omega_{0,0}$-terms from $V_0$. We will assume that and not mention it in what follows.}: let $v_n^m =e^{i\omega_{m,n}\Delta t}$ and
\begin{equation}
    V_m(j,:) = 
    \begin{bmatrix}
        (\overline{v_\beta^m})^j \quad   \cdots \quad  (\overline{v_0^m})^j \quad (v_0^m)^j  \quad \cdots \quad (v_\beta^m)^j
    \end{bmatrix},
     \quad \text{for } 0 \leq j \leq N_t. 
    \label{vandermonde}
\end{equation}
In terms of $V_m$, we can write
\begin{equation*}
    U(m,:)^\top = V_m \mathcal{C}(m,:)^\top. 
\end{equation*}
To show that $V_m$ is well-conditioned with the choice of $T$ and $N_t$, we rely on a type of discrete Ingham's inequality (cf. \cite{aubel2019vandermonde} or Theorem 10.23 \cite{plonka2018numerical}).  
We set $\{\xi_n\}_{n \leq N_\beta } \subset [-1/2,1/2)$, and let
\begin{equation}
    \tilde{\delta} = \min_{k\neq l} \left(\min_{n \in \mathbb{Z}} | \xi_k - \xi_l -n|\right).
    \label{wrap around}
\end{equation}
Let $v_n = \exp(i2\pi\xi_n)$, and let $N_t > \max(N_\beta ,1/\tilde{\delta})$. Then, the Vandermonde matrix
$$V = [v_n^j], \quad 0 \leq j \leq N_t,  \quad 0 \leq n \leq N_\beta,$$
satisfies 
$$ (N_t - 1/\tilde{\delta}) \|x\|_2^2\leq \|Vx\|_2^2 \leq (N_t + 1/\tilde{\delta})\|x\|_2^2, \quad \text{for all } x \in \mathbb{C}^{N_\beta}.$$
We now show that the under the assumptions on $N_t$ and $T$ in  \Cref{stability-finite}, the matrix $V_m$ defined in \eqref{vandermonde} 
satisfies the conditions of the discrete Ingham's inequality. For fixed $m \geq 0$, we have $$\{\xi_n\} = \{-\omega_{m,n}\Delta t/2\pi\}_{n \leq \beta} \cup \{\omega_{m,n}\Delta t2/\pi\}_{n \leq \beta}.  $$ 
The so-called ``wrap-around" metric in \eqref{wrap around} ensures that the matrix entries $e^{i2\pi \xi_n}$ are separated. For example, for $\xi_0 = 0, \xi_1 = 0.75, \xi_2 = 1 $, we have $\min_{n \in \mathbb{Z}} | \xi_0 - \xi_2 -n| = 0$ and ${\min_{n \in \mathbb{Z}} | \xi_0 - \xi_1 -n| = 0.25}$. We choose $\Delta t = \frac{4\pi}{5 \omega_{\beta,\beta}}$, as this ensures that 
$\{\omega_{m,n}\Delta t/2\pi\}_{|n|\leq \beta} \subset [-2/5,2/5]$ for all $m \leq \beta$. From  \cref{gap2} we find a bound on the distance between $\omega_{m,n}$ for varying $n$ and fixed $m$: 
Assume $ n,p \geq 0$. From  equation \eqref{omega-dist} in  \cref{gap2}, we have that $|\omega_{m,n} - \omega_{m,p}| \geq |k_{m,n} - k_{m,p}|\tilde{\gamma}$. For $m > 0$, 
\begin{equation*}
\begin{split}
    \min_{n \neq p} |k_{m,n} - k_{m,p}| &= |k_{m,1} - k_{m,0}| 
    = \pi\frac{m}{L}\left(\sqrt{1 + (L/Wm)^2)} -1\right)  \\
    &\geq \pi\frac{\beta}{L}\left(\sqrt{1 + (L/W\beta)^2)} -1\right)  \geq  \frac{\pi}{L}\left(\sqrt{\beta^2 + 1} -\beta\right).
\end{split}
\label{dist_est}
\end{equation*} 
Since $\pi/W \geq \frac{\pi}{L}\left(\sqrt{\beta^2 + 1} -\beta\right)$, this estimate also holds for $m = 0$. Finally, the smallest distance between two frequencies with opposite signs satisfies  $2\omega_{1,0} \geq k_{1,0}\tilde{\gamma}$, and we get a lower bound on the minimal wrap-around distance:
$$ \delta = \min_{k\neq l} \left(\min_{n \in \mathbb{Z}} |  \omega_{m,k}\frac{\Delta t}{2\pi} -  \omega_{m,l}\frac{\Delta t}{2\pi}-n|\right) \geq \frac{\Delta t}{2\pi} \min\left(1/5,\frac{\pi}{L}\left(\sqrt{\beta^2 + 1} -\beta\right) \tilde{\gamma}\right). $$

Recalling the choice of $ N_t = \bigg\lceil 1 + \frac{1}{\delta} \bigg\rceil$ and $T = \frac{4\pi N_t}{5\omega_{\beta,\beta}}$, we have $\Delta t = \frac{T}{N_t} = \frac{4\pi}{5 \omega_{\beta,\beta}}$. Hence, with $N_\beta = 2\beta +2$, the conditions of discrete Ingham's inequality hold. For each $m$, we therefore have 
\begin{align*}
    \left(N_t-\frac{1}{\delta}\right)\|\mathcal{C}(m,:)\|_2^2 &\leq \|V_m \mathcal{C}(m,:)^\top\|_2^2  = \|U(m,:)\|_2^2 \\
     & =  \|\left(D_3^{-1}\mathcal{M}\right)^\top(m,:)\|_2^2 \leq \|D_3^{-1}\|_2^2\|\mathcal{M}(:,m)\|_2^2
\end{align*}
Taking $S = \text{diag}(1/\sqrt{N_x},\sqrt{2/N_x},...,\sqrt{2/N_x})$, we have that $SD_3$ is unitary, and so ${D_3^{-1} = D_3^\top S^2}$. Consequently, we find that  $$\|D_3^{-1}\|_2 = \sqrt{\lambda_{\text{max}}(D_3^{-1}(D_3^{-1})^\top)} = \sqrt{\lambda_{\text{max}}(D_3^\top S(S^2) SD_3)}= \sqrt{\frac{2}{N_x}}.$$ Hence 
\begin{equation*}
    \|\mathcal{C}\|_F^2 = \sum_m \|\mathcal{C}(m,:)\|_2^2 \leq \sum_m \frac{2/N_x}{\left(N_t-\frac{1}{\delta}\right)}\|\mathcal{M}_\beta(:,m)\|_2^2 \leq \frac{2}{N_x\left(N_t-\frac{1}{\delta}\right)} \|\mathcal{M}_\beta\|_F^2, 
\end{equation*}
and since $\|\mathcal{A}\|_F^2 \leq \frac{4}{LW}\|\mathcal{C}\|_F^2$, we get 
$$ \|\mathcal{A}\|_F^2 \leq  \frac{LW}{2N_x\left(N_t-\frac{1}{\delta}\right)} \|\mathcal{M}_\beta\|_F^2. $$
Denote by $\mathcal{A}_R$ the matrix of Fourier coefficients of the reconstruction $R_\beta \eta_0$ obtained by applying procedure outlined above\footnote{See \Cref{section: experiments}.} to $\mathcal{M}$ . It now follows by linearity that 
 $$\|P_\beta \eta_0 - R_\beta \eta_0 \|_{L^2}^2 = \|\mathcal{A} - \mathcal{A}_R\|_F^2 \leq \frac{LW}{2N_x\left(N_t-\frac{1}{\delta}\right)} \|\mathcal{M}_\beta - \mathcal{M}\|_F^2.$$ 
Last, for $f \in \dot{H}^s, s > 0$, we have that 
\begin{equation*}
    \|f -P_\beta f\|_{L^2}^2 = \sum_{m,n > \beta} \hat{f}_{m,n}^2 \leq 2\beta^{-2s}\sum_{m,n > \beta}(m^2 + n^2)^s\hat{f}_{m,n}^2, 
\end{equation*}
and hence 
\begin{equation*}
    \|f -P_\beta f\|_{L^2} \leq \frac{\sqrt{2}}{\beta^s} \|f\|_{\dot{H}^s}. 
\end{equation*}
Consequently, we get 
\begin{align*}
    \|\eta_0 - R_\beta \eta_0 \|_{L^2} &= \|\eta_0 - P_\beta \eta_0 + P_\beta \eta_ 0 - R_\beta \eta_0 \|_{L^2} \leq \|\eta_0 - P_\beta \eta_0 \|_{L^2} + \|P_\beta \eta_ 0 - R_\beta \eta_0 \|_{L^2} \\
        &\leq  \frac{\sqrt{2}}{\beta^s} \|f\|_{\dot{H}^s} + \sqrt{\frac{LW}{2N_x\left(N_t-\frac{1}{\delta}\right)}} \|\mathcal{M}_\beta - \mathcal{M}\|_F.
\end{align*}

\end{proof}

\bibliographystyle{siamplain}
\bibliography{references}
\end{document}